\documentclass[12pt,a4paper]{article} 
\usepackage{setspace}
\usepackage[utf8]{inputenc} 
\usepackage{amsmath}
\usepackage{amsthm} 
\usepackage{amssymb}
\usepackage{todonotes}
\usepackage{xcolor}
\usepackage{graphicx}
\usepackage{tikz}
\usepackage[hidelinks]{hyperref}
\usepackage[nameinlink]{cleveref}
\usepackage[procnames]{listings}
\usepackage[margin=1.2in]{geometry}

\newtheorem{theorem}{Theorem}
\newtheorem{prop}{Proposition}
\newtheorem{lemma}{Lemma}

\newtheorem{claim}{Claim}
\theoremstyle{definition}
\newtheorem{rmk}{Remark}
\newtheorem{question}{Question}

\newtheorem{definition}{Definition}

\def\EE{\mathbb{E}}

\def\II{\mathbb{I}}

\def\NN{\mathbb{N}}

\def\PP{\mathbb{P}}

\def\RR{\mathbb{R}}

\def\ZZ{\mathbb{Z}}

\def\calF{\mathcal{F}}

\def\calP{\mathcal{P}}

\newcommand{\abs}[1]{\left\lvert#1\right\rvert}

\newcommand{\norm}[1]{\left\lVert#1\right\rVert}

\newcommand{\floor}[1]{\left\lfloor #1 \right\rfloor}
\newcommand{\ceil}[1]{\left\lceil #1 \right\rceil}
\newcommand{\paren}[1]{\left( #1 \right)}
\newcommand{\sqb}[1]{\left[ #1 \right]}
\newcommand{\set}[1]{\left\{ #1 \right\}}

\newcommand{\Var}{\mathrm{Var}}

\title{Sharp Threshold for the Convergence of Nonstationary Averaging}
\author{Saba Lepsveridze,  Elchanan Mossel}
\date{}

\begin{document} 
\maketitle

\begin{abstract}
We study non-stationary averaging processes, where each term of a sequence is a weighted average of previous terms, namely $a_{n+1} = \sum_{j=1}^n p_n(j) a_j$. Our results extend classical theory in two distinct regimes. First, we prove a sharp threshold for convergence in the regime where the weights are bounded between two envelopes $(\log n)^{-\alpha} \le np_n(\cdot) \le (\log n)^{\beta}$. We show that the sequence necessarily converges when $\alpha + \beta / 2 \leq 1$, while $\alpha + \beta / 2 > 1$ the convergence can fail.  Second, we study complementary fixed shape regime, when $p_n$ is obtained by a fixed limiting density on $(0,1)$. We show that under mild regularity assumptions, the sequence converges.
\end{abstract}

\section{Introduction}

Weighted running averages show up frequently in probability, dynamics, and optimization. A basic primitive behind these procedures is that the next iterate is a weighted average of the past. While classical theory understands these processes well when the weights are constant or uniform, much less is known when the averaging weights could be non-stationary and potentially spiky.

Let $\set{p_n}_{n = 1}^\infty$ be a sequence of probability measures with  $p_n \in \calP([n])$.  Consider a sequence of vectors  $\set{a_n}_{n = 1}^\infty \subset \RR^d$ defined by
 \begin{equation*}
	a_{n+1} = \EE_{j \sim p_n} a_j = \sum_{j = 1}^n p_n(j) a_j \text{ for } n \geq k,
\end{equation*}
and an arbitrary initialization $a_1, \ldots, a_k \in \RR^d$. We ask: 
\begin{question}
	Under what conditions on $\set{p_n}_{n = 1}^\infty$ does the sequence  $\set{a_n}_{n = 1}^\infty$ necessarily converge?  
\end{question}
A natural global constraint is to bound each weight between two envelopes
\begin{equation*}
	\frac{f(n)}{n}\leq p_n(j) \leq \frac{c(n)}{n} \text{ for all } j \in [n],
\end{equation*}
where $f(n)$ and $c(n)$ describe a floor and a ceiling, respectively. This allows a decay of $f(n)$ and blow-up of $c(n)$, which is far from uniform averaging. We answer this question by obtaining optimal rates for $f$ and $c$ under which convergence holds. 

\begin{theorem}\label{thm:main}
Suppose $f(n) = A (\log n)^{-\alpha} $ 	and $c(n) = B(\log n)^{\beta}$ with $\alpha, \beta > 0$.
\begin{itemize}
	\item[-] If $\alpha + \beta / 2 \leq 1$, then for any choice of $\set{p_n}_{n = 1}^\infty$ and any initialization, the sequence $\set{a_n}_{n=1}^\infty$ necessarily converges.
	\item[-] If $\alpha + \beta / 2 > 1$, then there exists a choice of $\set{p_n}_{n = 1}^\infty$ and an initialization such that $\set{a_n}_{n=1}^\infty$ fails to converge.
\end{itemize}
\end{theorem}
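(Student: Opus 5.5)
We work with the oscillation $D_n = \max_{i,j \le n} \norm{a_i - a_j}$, or for the first part with its coordinatewise analogue $M_n - m_n$, where $M_n = \max_{j\le n} a_j$ and $m_n=\min_{j\le n} a_j$ per coordinate. Since each new term is a convex combination of earlier ones, $M_n$ is nonincreasing and $m_n$ nondecreasing only in the weak sense that the range never grows. The plan is therefore to show that the range $[m_n,M_n]$ of a suitable recent window contracts by a definite factor over each dyadic-type block, with the accumulated contraction diverging exactly when $\alpha+\beta/2\le 1$.

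\textbf{Convergence for $\alpha+\beta/2\le 1$.} Work one coordinate at a time. Fix a block $[N, N']$ and let $I=[m,M]$ be the range of $a_1,\dots,a_N$. The floor $p_n(j)\ge f(n)/n$ forces every later term to put mass at least $f(n)N/n$ on $a_1,\ldots,a_N$, which pulls it toward the mean of the initial segment. The ceiling limits how far a spiky $p_n$ can shift mass onto extreme values: by Cauchy--Schwarz, the deviation $\sum_j p_n(j)(a_j-\bar a)$ is controlled by $\sqrt{c(n)}$ times an $\ell^2$-type oscillation. I would track a quadratic potential, namely the empirical variance $V_n=\frac1n\sum_{j\le n}(a_j-\bar a_n)^2$, or more robustly a variance weighted by the reference measure $dj/j$. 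The target is a recursion of the form
\begin{equation*}
V_{\text{next block}} \le \Bigl(1-\tfrac{c'}{(\log n)^{\alpha+\beta/2}}\Bigr) V_{\text{block}},
\end{equation*}
using blocks of multiplicative length $e$, so that $\log n$ runs over $k=1,2,\dots$. The factor $(\log n)^{-\alpha}$ comes from the floor mass, and $(\log n)^{-\beta/2}$ from converting variance into a max-deviation bound via the ceiling. Then $\prod_k (1-c'k^{-(\alpha+\beta/2)})\to 0$ precisely because $\sum_k k^{-(\alpha+\beta/2)}=\infty$ when the exponent is at most $1$. This gives $V\to 0$, hence vanishing oscillation, and the monotone range then yields convergence.

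The main obstacle is getting exactly the exponent $\beta/2$ and not $\beta$. A naive sup-norm argument only yields a contraction of order $f/c$, i.e. $(\log n)^{-(\alpha+\beta)}$. The square root must come from an $L^2$ energy argument: new terms can be far from the mean only if $p_n$ concentrates on few indices, but the ceiling caps that concentration, so the energy injected is at most $c(n)$ times the energy dissipated squared-root-wise. I would first try to write $a_{n+1}-\bar a = \langle p_n - u_n, a-\bar a\rangle$ with $u_n$ uniform, and bound it by $\norm{p_n/u_n}_{2}^{1/2}$-type quantities.

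\textbf{Counterexample for $\alpha+\beta/2>1$.} Construct alternating phases. In phase $k$, on the scale $\log n\approx k$, the weights put the minimal mass $f(n)/n$ on all indices except a set $S_k$ of the recent terms near the current target value $+1$ (respectively $-1$ on alternate phases). On $S_k$ the weights sit at the ceiling $c(n)/n$, with $|S_k|\approx n/c(n)$. By choosing the block lengths so that the per-phase loss of amplitude is of order $k^{-(\alpha+\beta/2)}$, where the $\beta/2$ comes from balancing the size of $S_k$ against how many steps are needed to move, the product $\prod_k(1-Ck^{-(\alpha+\beta/2)})$ stays positive. The sequence then oscillates with amplitude bounded below and does not converge. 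Verifying that the weights remain inside both envelopes and that the amplitude loss per phase is as claimed is a routine but delicate calculation.
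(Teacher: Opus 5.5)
\textbf{The convergence half rests on a step that is false.} Write $\varepsilon_n=f(n)$ and $\delta_n=(1-f(n))/(c(n)-f(n))\approx 1/c(n)$ for the fraction of indices that can carry the ceiling weight. Here is a counterexample to your variance contraction. Initialize with a fraction $\gamma=20\delta_k$ of $a_1,\dots,a_k$ equal to $1$ and the rest equal to $0$. Let $p_n$ put $c(n)/n$ on the $\delta_n n$ largest terms and $f(n)/n$ on the others. For $k\le n\le 10k$ we have $\delta_n n\le 10\delta_k k<\gamma k$, so the ceiling block consists of ones and $a_{n+1}=1-\varepsilon_n+\varepsilon_n\bar a_n\ge 1-f(k)$. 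Hence $V_k\approx 20/c(k)$. At time $10k$, about $k$ zeros sit next to $9k$ terms above $1-f(k)$, so $V_{10k}$ is bounded below by an absolute constant. The variance is therefore amplified by a factor $\asymp c(k)$ within a bounded number of $e$-blocks, and placing the ones at the end of the initialization does the same to the $dj/j$-weighted variance. Your bound $|a_{n+1}-\bar a_n|\le\sqrt{c(n)V_n}$ is essentially sharp here, so this is not where $(\log n)^{-\beta/2}$ comes from.

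The missing idea is that such a push drags the running mean toward the extreme, and the contraction then happens on the \emph{other} side. The paper keeps an interval $[\ell,u]$ containing all late terms and splits according to where $\bar a_n$ sits.
If $\bar a_n-\ell\le\gamma(u-\ell)$, only a cluster of relative mass $\lesssim\gamma$ can anchor the ceiling weights near $u$, and only for a multiplicative time $\approx\gamma/\delta_n$. After majorizing to two-point initializations and comparing with an explicit dominating sequence, all later terms stay below $u-c_0\varepsilon_n\delta_n(u-\ell)/\gamma$.
If instead $\bar a_n$ stays at relative distance at least $\gamma$ from both endpoints throughout a stage, the floor alone keeps new terms about $\varepsilon_n\gamma(u-\ell)$ inside.
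Balancing $\varepsilon\delta/\gamma$ against $\varepsilon\gamma$ gives $\gamma=\delta^{1/2}$ and a contraction factor $1-c_0\varepsilon\delta^{1/2}$ per stage of polylogarithmic multiplicative length, with $\varepsilon\delta^{1/2}\asymp(\log n)^{-(\alpha+\beta/2)}$. Then $\log n_T=O(T\log T)$, and $\sum_T(T\log T)^{-1}=\infty$ covers the boundary case. Note also that $V_n\to0$ alone would not finish the proof. It does not control the tail, and your pointwise conversion needs $V_n=o(1/c(n))$, which the $(\log n)^{-c'}$ decay your product yields at the threshold does not give.

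\textbf{The divergence half points the right way but leaves out the mechanism, and $S_k$ as described would not work.} At the start of an up-phase the recent terms are near the opposite value. The only terms near the target are old ones, and the push works only while those old terms fill the whole ceiling block. Once the ceiling mass falls on freshly generated terms, each new term is $\varepsilon_n\bar a_n+(1-\varepsilon_n)\times(\text{recent values})$, and the push relaxes to the running mean. What is needed is the paper's two-sided invariant: at the start of each phase, a fraction at least $\delta_n^{1/2}\approx c(n)^{-1/2}$ of all terms lies beyond the current target, and at least half lies beyond the opposite one. The phase then runs for a multiplicative factor $\delta_n^{-1/2}/2$. This is short enough that the old anchor still fills the ceiling block. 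It is long enough that the mean gets within relative distance $O(\delta_n^{1/2})$ of the target, so half the terms lose only $O(\varepsilon_n\delta_n^{1/2})$. And it is exactly long enough that the old half shrinks to the $\delta^{1/2}$ fraction that anchors the next, opposite phase. This is the same $\gamma$ versus $\delta/\gamma$ balance as above. That balance, together with a majorization step passing from the idealized two-valued configuration to the actual sequence, is the content you have deferred as a routine calculation.
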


In addition to the worst-case envelope-bounded weights considered above, \cref{sec:fixed-shape} establishes \cref{thm:discrete-fixed-shape} for a complementary \emph{fixed-shape} regime. In this setting, the weights are obtained by discretizing a limiting probability density on $(0,1)$. The additional structure permits a renewal-theoretic analysis and yields convergence under suitable regularity and approximation assumptions.

\begin{theorem}[Informal]
Let $\{p_n\}_{n=1}^\infty$ be a sequence of discrete probability distributions that approximate a density $p:(0,1)\to[0,\infty)$ with finite logarithmic moment. Let $\{a_n\}_{n=1}^\infty \subset \RR^d$ satisfy
\begin{equation*}
a_{n+1} = \EE_{j\sim p_n}\!\bigl[a_j\bigr]
\qquad \text{for all } n \ge k .
\end{equation*}
Then the sequence $\{a_n\}_{n=1}^\infty$ converges.
\end{theorem}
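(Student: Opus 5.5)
The plan is to pass to a backward random walk on the indices and use renewal theory for its logarithm. By linearity it suffices to treat $d=1$. Fix a large threshold $M \ge k$. For $N > M$, define a Markov chain $X_0 = N$, $X_{t+1} \sim p_{X_t - 1}$, run until the first time $\tau$ with $X_\tau \le M$. Unrolling the recursion gives the exact identity
\begin{equation*}
a_N = \EE\bigl[a_{X_\tau}\bigr] = \sum_{m \le M} \PP_N(X_\tau = m)\, a_m ,
\end{equation*}
since $a_n$ evaluated along the chain is a martingale until it enters $[M]$. The chain decreases strictly, so $\tau$ is finite. Hence $\{a_n\}$ converges provided the entrance distribution $\mu_N^{(M)}(\cdot) = \PP_N(X_\tau = \cdot)$ on $[M]$ is asymptotically Cauchy in total variation as $N \to \infty$, up to an error that vanishes as $M \to \infty$. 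The values $a_m$ for $m \le M$ are fixed finite numbers, and they are bounded by $\max_{j\le k}\lvert a_j\rvert$ because of the averaging.

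Next I compare with the idealized multiplicative walk. Let $U_1, U_2, \ldots$ be i.i.d.\ with density $p$ on $(0,1)$, and set $Y_0 = N$, $Y_{t+1} = Y_t U_{t+1}$. Then $\log Y_t = \log N - S_t$, where $S_t = \sum_{i \le t} (-\log U_i)$ is a random walk with positive increments. The increments have finite mean by the logarithmic-moment assumption, and they are non-lattice (in fact spread out) because $p$ is a density.

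I would then couple $X$ and $Y$ step by step. The approximation hypothesis (presumably $\sum_j \lvert p_{n}(j) - \int_{(j-1)/n}^{j/n} p \rvert \le \varepsilon_n$, with $\varepsilon_n$ small for large $n$) lets me couple $X_{t+1}$ with $\lceil X_t U_{t+1}\rceil$ except on an event of probability $\varepsilon_{X_t}$. The rounding errors are also small in log scale while $X_t \ge M$. Since the chain visits each dyadic scale only $O(1)$ times in expectation (by finite mean of the increments), the total coupling failure probability before entering $[M]$ is at most about $\sum_{\text{scales} \ge M} \varepsilon$. I would need this to tend to $0$ as $M \to \infty$, uniformly in $N$. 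This is where the "suitable approximation assumptions" of \cref{thm:discrete-fixed-shape} will be used, for instance $\sup_{n\ge M}\varepsilon_n \cdot$(expected number of visits) $\to 0$, or a summability condition along geometric scales.

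For the idealized walk, the first passage of $S_t$ above $\log(N/M)$ has an overshoot distribution that converges as $N \to \infty$, by the renewal theorem (Blackwell/key renewal theorem for non-lattice walks with finite mean). Consequently $Y_\tau / M$ converges in distribution to a limit with a density on $(0,1]$. This limit is determined by the stationary overshoot law $\PP(\text{overshoot} > x) = \EE[(\xi - x)^+]/\EE\xi$ with $\xi = -\log U$.

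Two further approximations are needed to turn this into convergence of $\mu_N^{(M)}$. First, the last step from a large index into $[M]$ lands in a single integer cell, so convergence in distribution of $Y_\tau/M$ must be upgraded to convergence of the probabilities of the lattice cells $\{m\}$. The limiting law has a density, so this holds up to boundary errors. Second, the discrete last step uses $p_{n}$ with $n$ comparable to $M$, not to $N$, and so is controlled by the same $\varepsilon$ bound. Together these show that $\limsup_{N,N'\to\infty} \lVert \mu_N^{(M)} - \mu_{N'}^{(M)}\rVert_{TV} \le \delta(M)$ with $\delta(M) \to 0$. Therefore $\limsup_{N,N'} \lvert a_N - a_{N'}\rvert \le 2\delta(M)\max_{j\le k}\lvert a_j\rvert$ for every $M$, and the sequence is Cauchy.

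The main obstacle I expect is the uniform-in-$N$ control of the accumulated discretization errors along the walk, together with justifying that the renewal limit transfers to the discrete landing distribution. If the plain coupling bound is too weak, I would first try to strengthen it by summing errors along geometric scales $[2^{\ell}, 2^{\ell+1})$ using a renewal bound on the expected number of visits per scale.
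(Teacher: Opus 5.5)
Your route differs from the paper's: you run the backward index chain and couple it to the multiplicative walk, whereas the paper uses a renewal equation with forcing. Most of it is sound: the identity $a_N=\EE[a_{X_\tau}]$, the overshoot limit for the idealized walk, and the coupling-failure bound summed over dyadic scales. For the last one, the paper's hypothesis $\norm{p_x-p}_1\le Cx^{-\kappa}$ gives $\sum_j\abs{p_n(j)-q_n(j)}\le Cn^{-\kappa}$ with $q_n(j)=\int_{(j-1)/n}^{j/n}p$, hence total failure $O(M^{-\kappa})$.

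The step that fails as written is the passage to lattice cells. Under your same-$U$ coupling, each rounding costs a relative error of order $1/X_{t+1}$. With probability bounded below, the last state visited before entry lies in $(M,2M)$. So at that time $X_{\tau-1}/Y_{\tau-1}=1+\Theta(1/M)$, i.e.\ the two positions differ by order one \emph{lattice unit}, and taking $M$ larger does not help.

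Consequently $\PP(X_\tau\ne\ceil{Y_\tau})$ is in general of order one, not $o(1)$. ``Small in log scale'' is exactly not small at the resolution of a single cell of $[M]$. Your density argument does show that $\mathcal{L}(\ceil{Y_\tau})$ converges in total variation. The $\varepsilon$-bound does let you replace $p_{n-1}$ by $q_{n-1}$ in the last step. But neither controls $\norm{\mathcal{L}(X_\tau)-\mathcal{L}(\ceil{Y_\tau})}_{\mathrm{TV}}$, which is what your Cauchy claim needs.

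The missing ingredient is a smoothing of the final jump. Write $\mu_N^{(M)}=\sum_{n>M}\PP_N(\text{the chain visits }n)\,p_{n-1}|_{[M]}$. Match pre-landing positions through your coupling, which gives relative error $O(1/M)$ and requires only weak closeness at scale $n/M$. Then recouple the last step maximally, using $\norm{q_n-q_{n'}}_{\mathrm{TV}}\le\frac12\norm{p-\lambda p(\lambda\,\cdot)}_{L^1}$ with $\lambda=n/n'$. This tends to $0$ as $\lambda\to1$ because dilations act continuously on $L^1$, so no extra hypothesis is needed. With this, $\norm{\mu_N^{(M)}-\mathcal{L}(\ceil{Y_\tau})}_{\mathrm{TV}}\le\delta(M)$ for all large $N$, and your argument closes.

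The paper never meets this issue. With $F(x)=a_{\ceil{x}}$, the identity $F(x)=\int_0^1p_x(t)F(tx)\,dt$ is exact for real $x>k$. Discretization therefore enters only as the additive forcing $\epsilon(x)=\int_0^1(p_x-p)(t)F(tx)\,dt$, with $\abs{\epsilon(x)}\le C\norm{F}_\infty x^{-\kappa}$, in the renewal equation for $G(s)=F(e^s)$. Optional stopping is run along the continuous walk, and the key renewal theorem sums the forcing; this plays the role of your sum over scales. The boundary term $\EE[F(e^{-R_s})]$ is trivial because $F\equiv a_1$ on $(0,1]$.

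That route is shorter, needs no coupling or lattice analysis, and gives an explicit limit. Yours makes the entrance law concrete, at the price of the extra smoothing step above.
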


The precise assumptions and statement appear in \cref{thm:discrete-fixed-shape}.

\subsection{Related Work}

Note that the recursion $a_{n+1}=\sum_{j=1}^n p_n(j)a_j$ can be viewed  as $a_{n+1}=\EE[a_{J_n}]$, where $J_n\sim p_n$.  Such recursions often appear in probabilistic models with memory as detailed below. These works motivate our setting but we assume neither stationarity nor model specific structure. In our paper, we focus on two general regimes: \cref{thm:main} gives a sharp worst case convergence threshold over all nonstationary kernels obeying pointwise envelope bounds, while \cref{thm:discrete-fixed-shape} treats a structured scaling invariant fixed shape regime. 

\paragraph{Split trees and tagged lineages.}
A concrete way our recurrence shows up in random trees is through the standard tree = root + subtrees decomposition.  In a split tree with $n$ items, the root partitions the items into subtree sizes $(J_{n,1},\dots,J_{n,b})$ according to a random split, and many parameters are obtained by iterating this decomposition down the tree as in \cite{janson2019splittrees}.  If one follows a tagged object (for example an uniformly chosen item, or the lineage of a random leaf), then at each split one keeps only the unique child subtree containing the tag. Hence, for many scalar quantity $a_n$ that is a function of the tagged subtree, one obtains
\[
a_{n+1}=\EE[a_{J_n}]=\sum_{j\le n}\PP(J_n=j)\,a_j.
\]
This recursion is often the key input before lifting to global statements about the whole tree like LLNs/CLTs for depths, profiles, and related additive functionals.

In fact, our fixed shape \cref{thm:discrete-fixed-shape} can be applied as a black box to Aldous' $\beta$-splitting model \cite{aldous1996cladograms} to obtain convergence behaviour for many recursions in these tree models for $\beta > -1$, when the splitting distribution has finite moments. 

Long-memory walks such as the elephant random walk also generate full-memory averaging recurrences, but we mention them only as further motivation rather than as a structural input \cite{schutz2004elephant,baur2016elephant}.

\paragraph{Self-averaging sequences}
Oftentimes  "probability of an event at time $n$" problems can be written in a self averaging form (see group Russian roulette example \cite{cator2017self-averaging} and many other references therein): one identifies some bounded statistic or an event probability $a_n$ and a random lookback index $J_n\in[n]$ such that
\[
a_{n}=\EE[a_{J_n}].
\]
Setting $p_n(j)=\PP(J_n=j)$, we reduce to studying our recursion.  Cator and Don \cite{cator2017self-averaging} study precisely this equation for bounded sequences, but under a concentration hypothesis $J_n\approx \alpha n$ with $\Var(J_n)=O(n)$.  Our results complement this line of research by studying worst-case adversarial envelope and a structured scaling regimes. In fact, our divergence construction \cref{thm:divergence} shows that oscillations can persist even when most of the mass spread over polylogarithmic number of indices.

\paragraph{Renewal theory}
Renewal theory analyzes convolution recursions such as
\[
u_0=1
\qquad\text{and}\qquad
u_n=\sum_{k=1}^n f_k\,u_{n-k},
\]
where $(f_k)_{k\ge1}$ is a probability mass function and $(u_n)$ is the associated renewal sequence \cite{erdos1949renewal,chung1952renewal,serfozo2009basics}.  A classical result by Erd\H{o}s-Feller-Pollard theorem states that when the mean $\mu\triangleq \sum_{k\ge1} k f_k$ is finite and $f$ is aperiodic, one has the sharp limit
\[
u_n \longrightarrow \frac{1}{\mu},
\]
which is the discrete-time analogue of the "renewal density $\to 1/\mu$" principle \cite{erdos1949renewal}.  More generally, key renewal theorems describe limits for perturbed renewal equations and identify the limiting constant via the mean $\mu$ and an explicit overshoot law \cite{chung1952renewal,serfozo2009basics}.

Our fixed shape regime in \cref{sec:fixed-shape} can be put in this framework after a logarithmic change of variables.  Roughly speaking, when the weights come from discretizing a density on $(0,1)$ the recursion has form $F(x)\approx \EE[F(Tx)]$ with $T\in(0,1)$.  Writing $x=e^{s}$ and $G(s)=F(e^{s})$ turns this into an additive renewal equation
\[
G(s)=\EE\!\left[G(s-Y)\right]+\eta(s),
\qquad
Y\triangleq \log(1/T),
\]
where $\eta$ captures discretization error.  Thus a finite log moment $\EE[\log(1/T)]<\infty$ plays exactly the role of a finite mean increment in classical renewal theory, and yields convergence together with an explicit residual description. This is stated in \cref{lem:renewal-with-error}, and might be of independent use beyond this application.

% \paragraph{Slow variation heuristics.}
% Thinking about the fixed shape regime $a_{n+1}\approx \EE[a_{\lfloor Tn\rfloor}]$, it is tempting to try and control the difference $a_{n+1}-a_n$ by invoking Ash-Erd\H{o}s-Rubel theory of $\varphi$-slowly varying functions \cite{ash1974slowly}. However, it turns out that a basic implementation does not obtain the convergence here as this method ignores cancellation coming from the averaging procedure. 

\paragraph{Consensus/social learning and stochastic approximation.}
Classical distributed consensus and social learning models iterate stochastic matrices on a fixed agent set and analyze convergence via connectivity/mixing assumptions \cite{degroot1974reaching,olshevsky2011convergencespeed}. 
In the Bayesian setting Bala and Goyal~\cite{bala1998learning} consider a Bayesian model, where when an agent joins they observe all previous agents (and their private signal) before taking their action. 
The analogous model in the DeGrott framework consists of growing network where when agents arrive sequentially, and agent $n+1$ forms an opinion by averaging earlier opinions with an attention profile $p_n$,
\[
x_{n+1}=\sum_{j\le n} p_n(j)\,x_j.
\] 

It is natural to ask when is asymptotic consensus reached in such a model. 
The Erd\H{o}s-Feller-Pollard theorem provides a positive answer when $p(n,j) = f_{n-j}$ and $f$ has a finite mean and is non-periodic. Our results show that consensus is achieved even if agents use different averaging weights as long as these are equitable enough among the preceding agents. Since our results are tight they also provide examples where if the weights are not equitable consensus is not reached. Prior work in learning on networks highlighted the role of various notions of equability in reaching consensus and in learning~\cite{golub2010naive,acemoglu2011bayesian,mossel2015strategic} 

\subsection{Acknowledgements}
E.M. Is partially supported by ARO MURI N00014241274, by Vannevar Bush Faculty Fellowship ONR-N00014-20-1-2826 and by a Simons Investigator Award.

S.L.'s research supported in part by NSF--Simons collaboration grant DMS-2031883.

\section{Reduction}
To prove the threshold in \cref{thm:main}, we reduce the problem to a one dimensional extremal process. This reduction starts by applying the argument coordinatewise and using affine invariance of the recursion. In particular, we assume without loss of generality that $\set{a_n}_{n=1}^\infty \subset \RR$. By affine invariance of the sequence, we can shift and scale the initialization so that all terms are contained in the interval $[0,1]$. For convenience, we also reparametrize the problem by setting
\begin{equation} \label{eq:repar}
	\varepsilon_n \triangleq  f(n) \text{ and } \delta_n \triangleq  \frac{1-f(n)}{c(n) - f(n)}
\end{equation}

The parameter $\delta_n$ is chosen so that assigning maximal weight $c(n)/n$ to the largest  $\delta_n n$ terms\footnote{To improve readability, we will ignore integrality issues throughout the paper, since they can be handled with routine adjustments and do not affect the asymptotic arguments.} and $f(n)/n$ to the remaining terms yields a valid probability measure.

We now introduce important notation.
For a sequence $\set{a_n}_{n = 1}^\infty$ and an index $m \in \NN$ denote
\begin{equation*}
	a_1^{(m)} \geq \cdots \geq  a_m^{(m)} \text{ and } a_1^{[m]} \leq \cdots \leq  a_m^{[m]}
\end{equation*}  
to be the descending and ascending orderings of the partial sequence $\set{a_1, \ldots, a_m}$, respectively. Similarly, we define the percentile averages
\begin{gather*}
	\bar{a}_m \triangleq  \frac{a_1 + \cdots + a_m}{m} \text{ and }
	\bar{a}^{(m)}_j \triangleq \frac{a_1^{(m)} + \cdots + a_j^{(m)}}{j} \text{ and }
	 \bar{a}^{[m]}_j \triangleq \frac{a_1^{[m]} + \cdots + a_j^{[m]}}{j}.
\end{gather*}
In words, $\bar{a}_m$ is the average of all terms $a_1, \ldots, a_m$, while $\bar{a}^{(m)}_j$ and $\bar{a}^{[m]}_j $ are averages of the largest and smallest $j$ terms among the first $m$ elements, respectively. 

\medskip

We now describe an equivalent formulation of the problem that we will work with for the rest of the paper.

\begin{lemma}
	Let $\set{a_n}_{n = 1}^\infty$ be a sequence of real numbers and $\set{p_n}_{n = 1}^\infty$ be a sequence of probability measures with $p_n$ supported on $[n]$ such that 
	\begin{equation}\label{eq:og-formulation}
		a_{n+1} = \EE_{j \sim p_n} a_j \text{ for all } n\geq k\text{ and } \frac{f(n)}{n} \leq p_n(j) \leq \frac{c(n)}{n} \text{ for all } j \in [n].
	\end{equation}
	Then, for all $n \geq k$ the sequence $\set{a_n}_{n = 1}^\infty$  satisfies 
	\begin{equation}\label{eq:new-formulation}
		a_{n+1} \in \varepsilon_n \bar{a}_n + (1-\varepsilon_n) [\bar{a}_{\delta_n n}^{[n]}, \bar{a}_{\delta_n n}^{(n)}] \text{ for all } n \geq k.
	\end{equation}
	Furthermore, if $\set{a_n}_{n=1}^\infty$ satisfies (\ref{eq:new-formulation}), then there exist $\set{p_n}_{n=1}^\infty$ such that (\ref{eq:og-formulation}) holds.
\end{lemma}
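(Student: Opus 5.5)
The plan is to study the set of all admissible weight vectors at a fixed $n$ and show that the set of values it produces is exactly the interval in (\ref{eq:new-formulation}). Fix $n\ge k$ and write every admissible $p_n$ as $p_n(j)=\frac{f(n)}{n}+q(j)$ with $q(j)\ge 0$, $\sum_j q(j)=1-f(n)=1-\varepsilon_n$, and $q(j)\le \frac{c(n)-f(n)}{n}$. Then
\[
\EE_{j\sim p_n}a_j=\varepsilon_n\bar a_n+\sum_{j=1}^n q(j)a_j ,
\]
so both directions come down to one claim: the values $\sum_j q(j)a_j$, as $q$ ranges over this capped simplex, fill exactly $(1-\varepsilon_n)[\bar a^{[n]}_{\delta_n n},\bar a^{(n)}_{\delta_n n}]$. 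Set $r=q/(1-\varepsilon_n)$. This is a probability vector with $r(j)\le \frac{c(n)-f(n)}{n(1-f(n))}=\frac{1}{\delta_n n}$, by the definition of $\delta_n$ in (\ref{eq:repar}).

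For the forward direction I will use the standard rearrangement (fractional knapsack) bound. Among probability vectors with every entry at most $1/(\delta_n n)$, the sum $\sum_j r(j)a_j$ is largest when the full cap $1/(\delta_n n)$ sits on the $\delta_n n$ largest terms. The argument is an exchange: if some larger $a_i$ is below its cap while some smaller $a_j$ carries positive mass, moving mass from $j$ to $i$ does not decrease the sum. The maximum is therefore $\bar a^{(n)}_{\delta_n n}$, and by symmetry the minimum is $\bar a^{[n]}_{\delta_n n}$. Multiplying by $1-\varepsilon_n$ and adding $\varepsilon_n\bar a_n$ gives (\ref{eq:new-formulation}).

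For the converse, the feasible set of $r$ is convex and the map $r\mapsto \sum_j r(j)a_j$ is linear. Both endpoints are attained: $r^{\max}$ puts the cap on the top $\delta_n n$ indices and $r^{\min}$ puts it on the bottom $\delta_n n$ indices. So given $a_{n+1}$ in the interval, I choose $\lambda\in[0,1]$ so that $r=\lambda r^{\max}+(1-\lambda)r^{\min}$ reproduces $a_{n+1}$, and set $p_n(j)=\frac{\varepsilon_n}{n}+(1-\varepsilon_n)r(j)$. The resulting $p_n$ satisfies $f(n)/n\le p_n(j)\le c(n)/n$ and gives the required average. For $n<k$ any $p_n$ works.

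The main obstacle is bookkeeping rather than ideas. The exchange argument must be done carefully when the $a_j$ have ties, and I need $f(n)\le 1\le c(n)$ so that $\delta_n\in(0,1]$; I will assume this, since otherwise no admissible $p_n$ exists. Integrality of $\delta_n n$ is ignored per the footnote. If it mattered, the boundary index would simply carry a fractional cap.
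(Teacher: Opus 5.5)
Your proposal is correct and follows the paper's proof. The forward direction is the same rearrangement bound: full cap $c(n)/n$ on the $\delta_n n$ largest (resp.\ smallest) terms and floor $f(n)/n$ elsewhere. The converse picks $\lambda_n\in[0,1]$ and interpolates between these two extremal weight vectors, exactly as the paper does. Writing the converse as the convex combination $\lambda r^{\max}+(1-\lambda)r^{\min}$ is in fact slightly cleaner than the paper's case-by-case definition of $p_n(j)$, because it automatically assigns the correct additive weight to an index lying in both the top and bottom $\delta_n n$ sets.
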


\begin{proof}
Note that $\varepsilon_n$ and $\delta_n$ are chosen precisely so that  
\begin{equation*}
	\frac{c(n)}{n}  \times \delta_nn  + \frac{f(n)}{n}\times (n - \delta_n n) = 1 \text{ and } f(n) = \varepsilon_n.
\end{equation*}
Suppose first that $\set{a_n}_{n = 1}^\infty$ and $\set{p_n}_{n = 1}^\infty$ satisfy  (\ref{eq:og-formulation}). Then, 
\begin{equation*}
	a_{n+1} = \sum_{j = 1}^n p_n(j) a_j \leq \sum_{j = 1}^{\delta_n n} \frac{c(n)}{n}a_{j}^{(n)} + \sum_{j = \delta_n n + 1}^n\frac{f(n)}{n} a_j^{(n)} = (1-\varepsilon_n)\bar{a}_{\delta_n n}^{(n)} + \varepsilon_n\bar{a}_n
\end{equation*}
In words, since the objective is linear in $\set{p_n}$, the maximum is attained by assigning maximal weight to the largest entries. Analogously, we have  
\begin{equation*}
	a_{n+1} \geq (1-\varepsilon_n)\bar{a}_{\delta_n n}^{[n]} + \varepsilon_n \bar{a}_n.
\end{equation*}
On the other hand, suppose $\set{a_n}_{n=1}^\infty$ satisfies (\ref{eq:new-formulation}). Then, for all $n \geq k$ there exists $\lambda_n \in [0,1]$, such that
\begin{equation*}
	a_{n+1} = \varepsilon_n \bar{a}_n + (1-\varepsilon_n)(\lambda_n \bar a_{\delta_n n}^{(n)} + (1-\lambda_n)a_{\delta_n n}^{[n]}).
\end{equation*}
Hence, we can write $a_{n+1} = \EE_{j \sim p_n} a_j$, where
\begin{equation*}
	p_n(j) = \frac{f(n)}{n} + \frac{c(n)-f(n)}{n}
		 \begin{cases}
		   \lambda_n  & \text{ if } a_j \text{ is among the largest } \delta_n n \text{ terms } \\
		1-\lambda_n & \text{ if } a_j \text{ is among the smallest } \delta_n n \text{ terms }\\
		0 &\text{ otherwise } \\
	\end{cases}
\end{equation*}
This concludes the two way reduction.
\end{proof}
With the reparameterization (\ref{eq:repar}), we abuse notation and write 
\begin{equation*}
	\varepsilon_n = A(\log n)^{-\alpha} \text{ and } \delta_n = B(\log n)^{-\beta},
\end{equation*}
for some constants $A, B >0$.

\section{Majorization}

In this section, we develop a comparison tool that helps us lift the analysis from simple sequences to more complex ones. In particular, we use majorization to prove the main technical propositions in the next section.

\begin{definition}(Majorization)
	We say that $\set{a_n}_{n = 1}^k$ majorizes  $\set{b_n}_{n = 1}^k$ if 
	\begin{gather*}
		a_{1}^{(k)} \geq b_1^{(k)} \\
		a_1^{(k)} + a_2^{(k)} \geq b_1^{(k)} +b_2^{(k)} \\ 
		\cdots \\
		a_1^{(k)} + \cdots +  a_k^{(k)} \geq b_1^{(k)} +  \cdots + b_k^{(k)} 
	\end{gather*}
	We denote this relation by $\set{a_n}_{n = 1}^k \succeq \set{b_n}_{n = 1}^k$.
\end{definition}

We note that this definition differs from the typical notion of majorization as we do not require equality of total sums. The following lemma states that if the initialization of one sequence majorizes that of another, then the former dominates the latter at all future times.

\begin{lemma}[Majorization]\label{lem:majorization}
	Suppose $\set{a_n}_{n = 1}^k$ majorizes $\set{b_n}_{n = 1}^k$ and 
    \begin{equation*}
		a_{n+1} = \varepsilon_n \bar{a}_n +(1-\varepsilon_n)\bar{a}_{\delta_n n}^{(n)} \text{ and } b_{n+1} \leq  \varepsilon_n \bar{b}_n +(1-\varepsilon_n)\bar{b}_{\delta_n n}^{(n)} \text{ for all } n\geq k.
	\end{equation*}
%	such that $\set{a_n}_{n = 1}^k \succeq \set{b_n}_{n = 1}^k$. 
Then
	\begin{enumerate}
		\item $\set{a_n}_{n = 1}^m \succeq \set{b_n}_{n = 1}^m$ for all $m \geq k$, and  
		\item $a_m \geq b_m$ for all $m \geq k+1 $. 
	\end{enumerate}
	
	\end{lemma}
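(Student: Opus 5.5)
We argue by induction on $m$, proving parts 1 and 2 together. The base case $m=k$ of part 1 is the hypothesis. For the inductive step, assume $\set{a_n}_{n=1}^m \succeq \set{b_n}_{n=1}^m$ for some $m \ge k$. Write $S^a_j \triangleq a_1^{(m)}+\cdots+a_j^{(m)}$ and $S^b_j$ analogously, so that $S^a_j \ge S^b_j$ for all $j\le m$. The key observation is that the update map is monotone in these partial sums. Indeed, $\bar{a}_m = S^a_m/m$ and $\bar{a}^{(m)}_{\delta_m m} = S^a_{\delta_m m}/(\delta_m m)$. Since $\varepsilon_m\in[0,1]$, the coefficients are nonnegative, and we get
\begin{equation*}
	a_{m+1} = \varepsilon_m \frac{S^a_m}{m} + (1-\varepsilon_m)\frac{S^a_{\delta_m m}}{\delta_m m} \ \ge\ \varepsilon_m \frac{S^b_m}{m} + (1-\varepsilon_m)\frac{S^b_{\delta_m m}}{\delta_m m} \ \ge\ b_{m+1}.
\end{equation*}
This gives part 2 at index $m+1$. (If $\delta_m m$ is not an integer, the top-average is a concave piecewise-linear interpolation of partial sums, and the same monotonicity holds.)

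Next we must show that appending $a_{m+1}$ and $b_{m+1}$ preserves majorization. We need
\begin{equation*}
	T^a_j \triangleq \text{sum of the $j$ largest of } a_1,\dots,a_{m+1} \ \ge\ T^b_j \quad \text{for all } j\le m+1.
\end{equation*}
We use the variational description $T^a_j = \max(S^a_j,\ a_{m+1}+S^a_{j-1})$, since the top $j$ of the extended list either excludes the new element or includes it together with the top $j-1$ old ones. Likewise $T^b_j=\max(S^b_j,\ b_{m+1}+S^b_{j-1})$. Using $S^a_j\ge S^b_j$, $S^a_{j-1}\ge S^b_{j-1}$ and $a_{m+1}\ge b_{m+1}$, each term of the $b$-maximum is dominated by the corresponding term of the $a$-maximum, so $T^a_j\ge T^b_j$. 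Here $S_0=0$, and for $j=m+1$ the term $S_{m+1}$ is absent. This completes the induction.

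The main point to handle carefully is the inequality $a_{m+1}\ge b_{m+1}$. It is exactly where we use that $a$ follows the extremal (upper) rule with equality while $b$ only satisfies the upper bound, and that both $\bar{a}_m$ and the top-$\delta_m m$ average are nondecreasing functions of the ordered partial sums. Because majorization here does not require equal totals, the full average $\bar{a}_m$ is also controlled by the last inequality $S^a_m\ge S^b_m$. Everything else is the routine max-formula for partial sums of an augmented list.
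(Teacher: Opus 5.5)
Your proof is correct and follows the paper's argument: the same joint induction on $m$, the same monotonicity of the update in the ordered partial sums (giving $a_{m+1}\ge b_{m+1}$), and then preservation of majorization when the new terms are appended. Your identity $T_j=\max(S_j,\,x_{m+1}+S_{j-1})$ is a compact form of the paper's two cases, namely whether or not $b_{m+1}$ lies among the top $j$ terms of the extended $b$-list.
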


\begin{proof} 
	We proceed to prove by induction. Base case $m = k$ holds by assumption. Assume now that $\set{a_n}_{n = 1}^m \succeq \set{b_n}_{n = 1}^m$. Majorization implies that  $\bar{a}_{j}^{(m)} \geq \bar{b}_{j}^{(m)}$ holds for any $j \in  \set{1, \ldots, m}$. In particular, 
		\begin{equation*}
		a_{m+1} = \varepsilon_m \bar{a}_m + (1-\varepsilon_m)\bar{a}_{\delta_m m}^{(m)}  \geq \varepsilon_m \bar{b}_m + (1-\varepsilon_m)\bar{b}_{\delta_m m}^{(m)} \geq b_{m+1}.
	\end{equation*}
	It remains to show that $\set{a_n}_{n = 1}^{m+1} \succeq \set{b_n}_{n = 1}^{m+1}$. Define multisets
	\begin{equation*}
		A_{j}^{(m)} \triangleq \set{a_1^{(m)}, \ldots, a_j^{(m)}} \text{ and } B_{j}^{(m)} \triangleq \set{b_1^{(m)}, \ldots, b_j^{(m)}}.
	\end{equation*}
	For a multiset $S$ we define $\Sigma S \in \RR$ to be the sum of the terms within.  We consider two cases as follows.
	\begin{itemize}
		\item[-] If $b_{m+1} \notin B_{j}^{(m)}$, then 
		\begin{equation*}
			\Sigma  B_{j}^{(m+1)}=\Sigma B_j^{(m)} \leq \Sigma A_j^{(m)} \leq \Sigma A_{j}^{(m+1)}.
		\end{equation*}
		\item[-] If $b_{m+1} \in B_{j}^{(m)}$, then 
		\begin{equation*}
			\Sigma  B_{j}^{(m+1)}= b_{m+1} +\Sigma B_{j-1}^{(m)} \leq a_{m+1} + \Sigma A_{j-1}^{(m)} \leq  \Sigma A_{j}^{(m+1)}.
		\end{equation*}
	 	\end{itemize}
	 This concludes the proof.	
\end{proof}

Another useful observation is that even if one sequence initially majorizes another, this dominance need not manifest itself in future iterates. More precisely, suppose the initialization of the second sequence is obtained from that of the first by replacing the top $m$ terms with their average. Then the two sequences evolve identically for as long as newly generated terms do not exceed the $m$-th largest term in $a$.

\begin{lemma}(Reverse Majorization)\label{lem:reverse-majorization} Let $\delta_n$ be a decaying parameter such that the sequence $\set{\delta_n n}_{n = k}^\infty$ is non-decreasing. Suppose $\set{a_n}_{n = 1}^\infty$ and $\set{b_n}_{n = 1}^\infty$ satisfy 
	\begin{align*}
		a_{n+1} &= \varepsilon_n \bar{a}_n +(1-\varepsilon_n)\bar{a}_{\delta_n n}^{(n)}  \\
		b_{n+1} &= \varepsilon_n \bar{b}_n +(1-\varepsilon_n)\bar{b}_{\delta_n n}^{(n)} \text{ for all } n \geq k.
	\end{align*}
	Suppose that there is $m \leq \delta_k k$, such that 
	\begin{equation*}
		b_j^{(k)} = \bar{a}_m^{(k)} \text{ for all } j \leq m \text{ and } b_j^{(k)} = a_j^{(k)} \text{ for all } m < j \leq k;  
	\end{equation*}
	Then, $a_n = b_n$ for all $n > k$ as long as $b_n \leq a_m^{(k)}$ for all $n  > k$. 
\end{lemma}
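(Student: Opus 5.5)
We argue by strong induction on $n > k$, under the standing hypothesis that $b_n \le a_m^{(k)}$ for every $n>k$. The inductive claim $\mathcal{I}(n)$ has two parts. First, $a_j = b_j$ for all $k < j \le n$. Second, the multisets $\{a_1,\ldots,a_n\}$ and $\{b_1,\ldots,b_n\}$ agree except on their top $m$ entries. There the $a$-multiset has $a_1^{(k)},\ldots,a_m^{(k)}$ and the $b$-multiset has $m$ copies of $\bar a_m^{(k)}$. Consequently every entry outside these top $m$ positions is $\le a_m^{(k)}\le \bar a_m^{(k)}$. The base case $n=k$ is exactly the hypothesis on the initializations. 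The only subtlety is that entries of the $b$-initialization need not sit at the same indices as in $a$. Since the recursion depends only on the multiset of values and not on positions, we compare the two processes at the level of sorted multisets throughout.

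For the inductive step, assume $\mathcal{I}(n)$ and compute $a_{n+1}$ and $b_{n+1}$. The overall sums agree, because replacing the top $m$ values by their average preserves the total, so $\bar a_n = \bar b_n$. For the top-$\delta_n n$ average, we use that $\delta_n n \ge \delta_k k \ge m$, which follows from the monotonicity assumption on $\delta_n n$. Hence the top $\delta_n n$ entries of each multiset contain the whole modified block together with the same remaining entries. These remaining entries are the next $\delta_n n - m$ largest values, which are common to both multisets. A subtle point is ties: if some later entry equals $a_m^{(k)}$ or exceeds it, the ordering could mix. Exceeding is ruled out by the standing assumption that new terms satisfy $b_j\le a_m^{(k)}$, combined with $a_j=b_j$ from $\mathcal{I}(n)$. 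Ties at the value $a_m^{(k)}$ are harmless, since swapping equal values does not change any sum. Therefore $\sum_{i\le \delta_n n} a_i^{(n)} = \sum_{i\le \delta_n n} b_i^{(n)}$, which gives $\bar a^{(n)}_{\delta_n n} = \bar b^{(n)}_{\delta_n n}$ and hence $a_{n+1}=b_{n+1}$.

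It remains to propagate the multiset structure. The new common value $a_{n+1}=b_{n+1}$ satisfies $b_{n+1}\le a_m^{(k)}$ by hypothesis. So it is inserted below (or tied with) the modified block in both multisets and does not disturb the top $m$ positions. This establishes $\mathcal{I}(n+1)$.

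The main obstacle is the bookkeeping of sorted positions, specifically guaranteeing that the top-$m$ block stays exactly the top $m$ in $a$. This requires that no term generated later exceeds $a_m^{(k)}$, and that $\delta_n n\ge m$ so the whole block is always averaged together. Both facts come directly from the hypotheses: the stopping condition $b_n\le a_m^{(k)}$, and the nondecreasing $\delta_n n$ together with $m\le\delta_k k$. Once these are in place, the equality of the two sums used in the recursion is immediate.
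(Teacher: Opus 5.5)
Your proof is correct and follows essentially the same route as the paper: an induction maintaining that the sorted multisets of $a$ and $b$ differ only in the top $m$ block. It uses $\delta_n n \ge \delta_k k \ge m$ to equate the top-$\delta_n n$ averages and the hypothesis $b_{n+1} \le a_m^{(k)}$ to keep each new term out of that block. Your multiset formulation, with its explicit handling of ties, is if anything slightly cleaner than the paper's index-based invariant, which contains a $k$-versus-$n$ typo.
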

\begin{proof}
The main idea is to show that the sequences $a$ and $b$ evolve identically: the only discrepancy between them is confined to the top $m$ terms, and these terms remain at the top throughout.

	It suffices to show $\bar{a}_n = \bar{b}_n$ and $\bar{a}_{\delta_n n}^{(n)} = \bar{b}_{\delta_n n}^{(n)}$ for all $n \geq k$. We will proceed to prove by induction that for all $n \geq k$,
\begin{equation*}
	b_j^{(n)} = \bar{a}_m^{(n)} \text{ for all } j \leq m \text{ and } b_j^{(n)} = a_j^{(n)} \text{ for all } m < j \leq k.
\end{equation*}
	Note that since $m \leq \delta_k k \leq \delta_n n$, this implies both  $\bar{a}_{\delta_n n}^{(n)} = \bar{b}_{\delta_n n}^{(n)}$ and $\bar{a}_n = \bar{b}_n$.
	
	The base case $n = k$ holds by assumption. On the other hand, inductive hypothesis immediately implies $a_{n+1} = b_{n+1}$. Moreover, by assumption, $b_{n+1} \leq a_m^{(k)} = a_m^{(n)}$. This means that the new term does not enter the top $m$ terms in $a$ or $b$, which concludes the proof.
\end{proof}

\section{Main Technical Tools}
In this section, we use majorization to establish two main technical results, \cref{prop:technical-convergence} and \cref{prop:technical-divergence}, which will be used in the next section to prove convergence and non-convergence, respectively.

Roughly speaking, \cref{prop:technical-convergence} asserts that if the initialization of the sequence is mostly contained in an interval $[B,U]$ and its average is close to $B$, then the future iterates remain uniformly bounded away from $U$.

\begin{prop}\label{prop:technical-convergence} Suppose $\varepsilon_n = A(\log n)^{-\alpha}$ and $\delta_n = B(\log n)^{-\beta}$ with $\alpha, \beta \geq 0$. There exist constants $c$ and $C$ such that the following holds. Suppose $k \geq C$  and let $\set{a_n}_{n=1}^\infty$ be a sequence with initialization $a_1, \ldots, a_k \in [0,1]$  satisfying
\begin{equation*}
		a_{n+1} = \varepsilon_n \bar{a}_n + (1-\varepsilon_n)\bar{a}^{(n)}_{\delta_n n} \text{ for all } n \geq k.
\end{equation*} 
Suppose there exists an interval $[B,U]$ such that 
 \begin{enumerate} 
 	\item $\bar{a}_k =  \gamma U + (1-\gamma)B$ with $\gamma \leq \frac{1}{2}$; 
 	\item Among the initial $k$ terms $\set{a_n}_{n=1}^k$, less than $	c k (U-B)\min\set{\gamma, \varepsilon_k\delta_k}$ terms lie outside the interval $[B,U]$. \end{enumerate}
 Then for all $n > k$,
 \begin{equation*}
 	a_n \leq B+ (U-B)\begin{cases}
			C \,{\gamma}/{\varepsilon_k \delta_k} &\text{ for } \gamma \leq 1/2, \\
			1 - c\,{\varepsilon_k \delta_k}/{\gamma} & \text{ for } \gamma \geq \varepsilon_k\delta_k /2C
		\end{cases}
 \end{equation*}
 
\end{prop}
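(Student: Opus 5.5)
The plan is to reduce, via \cref{lem:majorization}, to an explicit extremal initialization whose evolution can be computed, and to use \cref{lem:reverse-majorization} to simplify it further. By affine invariance, normalize $B=0$, $U=1$. Points below $B$ only lower the trajectory, so raise them to $B$. Points above $U$ are at most $1$ in the original scale, and there are at most $ck\min\{\gamma,\varepsilon_k\delta_k\}$ of them (the factor $U-B$ in hypothesis 2 is exactly what absorbs their rescaled heights, which are at most $1/(U-B)$). Together they contribute total mass at most of order $ck\min\{\gamma,\varepsilon_k\delta_k\}$, so they can be replaced by a majorizing configuration inside $[0,1]$ after increasing $\gamma$ by a constant factor.

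Among initializations in $[0,1]$ with average $\gamma$, the one majorizing all others is $\gamma k$ ones followed by zeros. By \cref{lem:majorization}, it suffices to bound the extremal sequence with $a^{(k)} = (1,\dots,1,0,\dots,0)$, where the ones number $\gamma k$, and equality holds in the recursion.

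Next analyze this extremal sequence. If $\gamma k \ge \delta_k k$, the top-$\delta_n n$ average is $1$ until the window $\delta_n n$ outgrows the supply of ones. The new terms are then $\varepsilon_n\bar a_n+(1-\varepsilon_n)\le 1-\varepsilon_n(1-\bar a_n)$, so they stay below $1-c\varepsilon_k$ as long as $\bar a_n\le 1/2$. This bound must be tracked over the whole future, using that $\delta_n n$ grows essentially linearly while $\varepsilon_n,\delta_n$ vary only logarithmically. In the regime $\gamma<\delta_k$, the top window contains only $\gamma k$ ones, so the window average is $\gamma/\delta_k$. New terms are roughly $\varepsilon\gamma+(1-\varepsilon)\gamma/\delta$, and the ones get diluted over time.

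\cref{lem:reverse-majorization} is the tool for making this quantitative. Replace the top $m=\gamma k$ ones by their average over a larger block, so the configuration becomes a plateau of height $h$ on a block of size $\delta_k k$ plus zeros. As long as new terms stay below the plateau height, the two evolutions coincide. The plateau sequence is then a two-level system: plateau at height $h$, a growing set of new terms, and zeros. Its top-window average decreases as $n$ grows because the window $\delta_n n$ expands and the overall mean is pulled down by the $\varepsilon_n\bar a_n$ term, with $\bar a_n\le h\delta_k$.

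The target bounds then come from a self-consistent estimate. Set $x_n=\max_{k<j\le n}a_j$ and show $x_{n+1}\le \varepsilon_n\bar a_n+(1-\varepsilon_n)\max(\text{plateau contribution}, x_n)$, where the plateau contribution is $m/(\delta_n n)\cdot 1$. One gets a fixed point of the form $x\lesssim \gamma/(\varepsilon_k\delta_k)$: the new terms can only sustain themselves if they exceed the plateau, and each step loses an $\varepsilon$ fraction toward a mean that is at most $O(\gamma)$. That gives the first case. The second case, $1-c\varepsilon_k\delta_k/\gamma$, follows similarly: when $\gamma\gtrsim\varepsilon_k\delta_k$, the mean is at most $\gamma$, and the top window contains at least a fraction of order $1-\gamma$ below $1$.

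The main obstacle I expect is controlling the interaction over all future times $n\gg k$, where new terms enter the top window. Logarithmic variation of $\varepsilon_n,\delta_n$ must be handled, for instance by using $\varepsilon_n\ge\varepsilon_k/2$ on dyadic ranges $n\le k^2$ and arguing that beyond this the mean has decayed enough. I would try an inductive invariant on dyadic blocks $[2^i k,2^{i+1}k]$: the fraction of terms above any level $t$ shrinks geometrically, which keeps $\bar a^{(n)}_{\delta_n n}$ bounded by the claimed constant.
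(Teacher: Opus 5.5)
Your starting point, majorizing down to $\gamma k$ ones and zeros via \cref{lem:majorization}, is also the paper's. However, the proposal never supplies the mechanism that controls all $n>k$, and the steps you rely on for it are false.

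(i) The fixed-point heuristic assumes the running mean stays $O(\gamma)$. It does not. Each new term is at least $\bar a_n$, so the mean never decreases, and it is driven far above $\gamma$. For example, if $\gamma<\delta_k$, the terms on $[k,2k]$ are at least $(1-\varepsilon_n)\min\{1,\gamma k/(\delta_n n)\}$, so already $\bar a_{2k}\gtrsim\gamma/\delta_k$. If you use only $\bar a_n\le x_n+\gamma k/n$ together with the correct window bound $\bar a^{(n)}_{\delta_n n}\le x_n+\gamma k/(\delta_n n)$ (a sum, not a max), you get $x_{n+1}\le x_n+\gamma k/(\delta_n n)$. Its increments already total order $\gamma k/\delta_k$ on $[k,2k]$, so this gives nothing.

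The missing idea is to control the lag between the newest terms and the mean. The paper dominates $a$ by $u_{n+1}=\varepsilon_n\bar u_n+(1-\varepsilon_n)u_n+\gamma k/(\delta_n n)$ with $u_1=\dots=u_k=0$. It observes that $\Delta_n=n(u_n-\bar u_n)$ satisfies $\Delta_{n+1}=(1-\varepsilon_n)\Delta_n+\gamma k/\delta_n$, hence $\Delta_n\le\gamma k/(\varepsilon_n\delta_n)$. It then sums $u_{n+1}=\Delta_{n+1}/n+\sum_{t=k+1}^{n}\Delta_t/(t(t-1))=O(\gamma/\varepsilon_k\delta_k)$.

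(ii) The plateau step misuses \cref{lem:reverse-majorization}. Spreading the $\gamma k$ ones over $\delta_k k$ slots produces a configuration majorized \emph{by} the original. The lemma would need every new term to stay below $a^{(k)}_{\delta_k k}=0$. By \cref{lem:majorization}, the plateau sequence is therefore a lower bound for $a$, not an upper bound.

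(iii) The second case is not ``similar''. For $\gamma\ge 2\delta_k$, \cref{lem:technical-helper-2} shows that half the terms up to $\gamma k/2\delta_k$ exceed $1-\varepsilon_k(5\delta_k/\gamma)^{1-\varepsilon_k}$. So your $1-c\varepsilon_k$ bound is only transient, and the margin genuinely shrinks to roughly $\varepsilon_k\delta_k/\gamma$. Your dyadic invariant also fails, because the fraction of terms near the top grows rather than shrinks.

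The paper needs two phases here (\cref{lem:technical-convergence-helper}). First, an invariant $a_n\le\lambda$ with $1-\lambda\asymp\varepsilon_m\delta_m/\gamma$ holds up to the time $m$ defined by $\varepsilon_m\delta_m m=Kk$. Second, it majorizes the first $m$ terms by $\gamma k$ ones and the rest at $\lambda_+$, and reruns \cref{lem:technical-helper} from time $m$. By then the ones are diluted to fraction $\gamma\varepsilon_m\delta_m/K$, so the excess over $\lambda_+$ is at most $\tfrac34(1-\lambda_+)$.

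The outlier reduction also fails. No configuration inside $[B,U]$ majorizes one with an entry above $U$, since the first partial-sum inequality already breaks. So \cref{lem:majorization} cannot remove those terms.

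For the first bound this is harmless. The comparison with $u_n$ goes through with the window estimate $\bar a^{(n)}_{\delta_n n}\le u_n+M/(\delta_n n)$, which uses only nonnegativity and the total initial mass $M$. As you computed, the outliers add only $O(ck\min\{\gamma,\varepsilon_k\delta_k\})$ to $M$ in the normalized scale.

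For the second bound, the outliers' effect must be a small fraction of the margin $(U-B)\varepsilon_k\delta_k/\gamma$. That is exactly what the $\varepsilon_k\delta_k$ inside the minimum in hypothesis 2 is for. The paper collapses the $\{U,1\}$-block into its average $U_+$ and proves $U_+-U\le 2c(U-B)\min\{1,\varepsilon_k\delta_k/\gamma\}$ (Claim~\ref{claim:helper}). Alternatively, you can carry that excess mass additively through both phases; it is absorbed once $c$ is small. This route also does not need the collapsed block (of size $\gamma_+k$) to meet the hypothesis $m\le\delta_k k$ of \cref{lem:reverse-majorization}.
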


\Cref{prop:technical-convergence} will be used to show that intervals capturing the tails of the sequence contract sufficiently to ensure convergence.

In contrast, \cref{prop:technical-divergence} shows that if the initialization of the sequence contains a sufficiently large proportion of terms exceeding a threshold $U$, then these values can force subsequent iterates to increase.

\begin{prop}\label{prop:technical-divergence}
Suppose $\varepsilon_n = A (\log n)^{-\alpha}$ and $\delta_n = B (\log n)^{-\beta}$ with $\alpha,\beta > 0$. There exists a constant $C$ such that the following holds. Suppose $k \geq C$  and 
let $\set{a_n}_{n=1}^\infty$ be a sequence with initialization $a_1, \ldots, a_k \in [0,1]$  satisfying 
	\begin{equation*}
		a_{n+1} = \varepsilon_n \bar{a}_n + (1-\varepsilon_n)\bar{a}^{(n)}_{\delta_n n} \text{ for all } n \geq k.
	\end{equation*} 
	Suppose that  among the initial $k$ terms $\set{a_n}_{n=1}^k$, at least $\delta_k^{1/2}$ fraction exceed $U$. Then, among the first $k  / 2\delta_k^{1/2}$ terms, at least half satisfy
	\begin{equation*}
		a_n \geq \paren{1- C\varepsilon_k \delta_k^{1/2}}U.
	\end{equation*}
\end{prop}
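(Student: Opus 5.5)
Throughout we write $\eta = \delta_k^{1/2}$ and $N = k/2\eta$.

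\textbf{Reduction.} By \cref{lem:majorization} we may replace the initialization by a smaller one. The $a$-recursion here holds with equality, and any initialization it majorizes generates iterates that it dominates. So lower all terms exceeding $U$ down to exactly $U$, and set all remaining terms to $0$. The resulting sequence $b$ is majorized by $a$, so $a_n \ge b_n$ for all $n>k$. It therefore suffices to prove the claim for $b$, whose initialization has $\eta k$ copies of $U$ and zeros elsewhere. We also normalize $U=1$ by scaling.

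\textbf{Plan for the simplified sequence.} Let $T_n$ be the number of terms among the first $n$ that are at least $1-\theta$, where $\theta = C\varepsilon_k\eta$ is the target deficit. The aim is to keep the top $\delta_n n$ terms entirely in the near-$1$ cluster for all $n \le N$. This requires $T_n \ge \delta_n n$, which holds initially since $\eta k \ge \delta_k k$. As long as it holds, we have
\[
b_{n+1} \ge (1-\varepsilon_n)\bar b^{(n)}_{\delta_n n} \ge (1-\varepsilon_n)\cdot \min(\text{cluster}).
\]
This crude bound loses a factor $1-\varepsilon_n$ at every step, which compounds too quickly. The fix is to use the $\varepsilon_n\bar b_n$ term as well. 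Since at least a fraction $\approx T_n/n$ of all terms lie near $1$, we get $\bar b_n \gtrsim T_n/n$. Hence
\[
b_{n+1} \ge \varepsilon_n T_n/n + (1-\varepsilon_n)\,\bar b^{(n)}_{\delta_n n},
\]
and the deficit of the new term satisfies
\[
1 - b_{n+1} \le \varepsilon_n\bigl(1 - T_n/n\bigr) + \text{(deficit of the top block)}.
\]
Every new term then joins the cluster, so $T_n \ge \eta k + (n-k)$. The quantity $T_n/n$ stays at least $1/2$ when $n \le N$: indeed, for $n \le 2k$ the new terms alone already exceed half only eventually, so instead we track the averaged deficit via a potential argument.

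\textbf{Bookkeeping.} The key point is to control the average deficit of the top $\delta_n n$ block rather than its worst term. New terms enter the top block with deficit roughly $\varepsilon_n(1-\bar b_n)$ plus the current top-block average deficit, so the top-block average deficit $D_n$ satisfies a recursion of the form
\[
D_{n+1} \le D_n + \frac{\varepsilon_n(1-\bar b_n)}{\delta_n n}\cdot(\text{rate of entry}).
\]
Summing this over $n$ from $k$ to $N$, a horizon of length $k/\eta$, gives a total of about $\varepsilon_k\,(k/\eta)\cdot \frac{1}{\delta_k k}\cdot \eta$ after accounting for dilution. This must be checked to be $O(\varepsilon_k \eta)$. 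It should work because, as the block size $\delta_n n$ grows, old deficits get diluted, and $\varepsilon_n,\delta_n$ are essentially constant over $[k, k/\eta]$ since $\log$ changes only by $O(\log\log k)$.

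Finally, half of the first $N$ terms are near $1$: the $N-k \ge N/2$ new terms each have deficit at most $C\varepsilon_k\eta$. The expected main obstacle is making this deficit accounting precise, in particular showing that the deficits do not accumulate to order $\varepsilon_k$ but only to $\varepsilon_k\eta$. I would attempt this via \cref{lem:reverse-majorization}-style comparisons: replace the top block by its average and compare against an explicit two-value sequence whose evolution can be solved in closed form.
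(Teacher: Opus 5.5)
\textbf{There is a genuine gap.} Your reduction (majorize down to $\eta k$ copies of $U$ and zeros, then rescale to $U=1$) matches the paper's. The core argument, however, is not there, and your final step is false as stated. You claim that each of the $N-k$ new terms has deficit at most $C\varepsilon_k\eta$. But the very first new term is $b_{k+1}=\varepsilon_k\eta+(1-\varepsilon_k)\cdot 1$, with deficit $\varepsilon_k(1-\eta)$. That is of order $\varepsilon_k$, not $\varepsilon_k\eta$. Early new terms genuinely sit far from the cluster, so $T_n\ge \eta k+(n-k)$ fails, and the ``$T_n/n\ge 1/2$'' step cannot be repaired in that form.

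Your top-block bookkeeping for $D_n$ is also left unverified. The estimate you write, $\varepsilon_k\,(k/\eta)\cdot\frac{1}{\delta_k k}\cdot\eta=\varepsilon_k/\delta_k$, is much larger than $1$, so ``dilution'' is being asked to do all the work.

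\textbf{The missing observation.} There is no top-block deficit at all on this horizon. You were one step away when you noted $\eta k\ge \delta_k k$. For every $n\le N$ one has $\delta_n n\le \delta_k N=\eta k/2<\eta k$, and every term is at most $1$. So the top $\delta_n n$ entries are initial ones, and $\bar b^{(n)}_{\delta_n n}=1$ exactly. New terms never enter the top block.

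\textbf{How the argument then closes.} Since the top block is frozen at $1$, we get $1-b_{n+1}=\varepsilon_n(1-\bar b_n)\le\varepsilon_k(1-\bar b_n)$. The only quantity to track is the global average deficit $d_n=1-\bar b_n$, which satisfies $d_{n+1}\le d_n\bigl(1-\frac{1-\varepsilon_k}{n+1}\bigr)$. Hence $d_n\le\bigl(\frac{k+1}{n+1}\bigr)^{1-\varepsilon_k}$. For $n\ge N/2=k/(4\eta)$ this gives $d_n\le(5\eta)^{1-\varepsilon_k}$. Moreover $\eta^{-\varepsilon_k}=\exp\bigl(\tfrac{\varepsilon_k}{2}\log(1/\delta_k)\bigr)=O(1)$, and this is exactly where the hypothesis $\alpha>0$ enters. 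So $d_n=O(\eta)$ there.

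Consequently only the terms with index in $[N/2,N]$ are guaranteed deficit $O(\varepsilon_k\eta)$. That is exactly half of the first $N$ terms, which is why the statement says ``at least half.'' Your plan identifies neither this mechanism (the global average rising to $1-O(\eta)$) nor the $\eta^{-\varepsilon_k}=O(1)$ step.
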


\Cref{prop:technical-divergence} will be used to show that the sequence can be driven sufficiently "back and forth" to prevent convergence.

\begin{rmk}
 \Cref{prop:technical-convergence} is a partial converse of \cref{prop:technical-divergence}. In particular, note that if  $B = 0$ and $\gamma=  \delta_k^{1/2} $, then  it promises $a_n \leq U(1 - c\varepsilon_k \delta_k^{1/2})$, while from the latter we have $a_n \geq U(1-C \varepsilon_k \delta_k^{1/2})$. This is where the threshold $\alpha + \beta/2 = 1$ arises. 
\end{rmk}

\subsection{Tools for Upper Bounds}
In this section, we prove a helper lemma, which we will later lift by majorization to prove \cref{prop:technical-convergence}. The latter is the main tool we will use for upper bounds.

\begin{lemma}\label{lem:technical-helper}
	Suppose $\varepsilon_n = A(\log n)^{-\alpha}$ and $\delta_n = B(\log n)^{-\beta}$ with $\alpha, \beta \geq 0$. There exists a constant $C$ such that the following holds. Suppose $k \geq C$  and let $\set{a_n}_{n=1}^\infty$ be a sequence with initialization $a_1, \ldots, a_k \in [0,1]$  satisfying
\begin{equation*}
		a_{n+1} = \varepsilon_n \bar{a}_n + (1-\varepsilon_n)\bar{a}^{(n)}_{\delta_n n} \text{ for all } n \geq k.
\end{equation*} 
Suppose $\bar{a}_k\leq \gamma$. Then, for all $n > k$ \begin{equation*}
	a_n \leq C \frac{\gamma }{\varepsilon_k \delta_k}.
\end{equation*} 
\end{lemma}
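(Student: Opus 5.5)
The plan is to reduce to a single worst-case initialization using \cref{lem:majorization}, and then control that extremal sequence through a potential argument on the top-$\delta_n n$ sum.

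\emph{Reduction.} Since $a_1,\dots,a_k\in[0,1]$ and $\sum_{j\le k}a_j\le \gamma k$, the partial sums of the decreasing rearrangement satisfy $\sum_{i\le j}a_i^{(k)}\le \min\{j,\gamma k\}$. Let $b_1,\dots,b_k$ consist of $\gamma k$ ones followed by zeros, ignoring integrality. Then $\{b_n\}_{n=1}^k\succeq\{a_n\}_{n=1}^k$. Let $b$ evolve by the same extremal recursion. By \cref{lem:majorization}, applied with the roles of $a$ and $b$ swapped, $a_n\le b_n$ for all $n>k$, so it suffices to bound $b$.

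\emph{Trivial range.} If $\gamma\ge \varepsilon_k\delta_k/C$, the claimed bound is at least $1$ and holds trivially, because every iterate is a convex combination of values in $[0,1]$. We may therefore assume $\gamma k\ll \varepsilon_k\delta_k k\le \delta_k k$. In that case the initial top window of size $\delta_k k$ contains all the ones, and the initial top average is $\gamma/\delta_k$.

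\emph{Potentials.} For the extremal sequence we track two quantities: the total sum $S_n=n\bar b_n$ and the top-window sum $T_n=\delta_n n\,\bar b^{(n)}_{\delta_n n}$. The recursion gives
\begin{equation*}
b_{n+1}=\varepsilon_n\frac{S_n}{n}+(1-\varepsilon_n)\frac{T_n}{\delta_n n}.
\end{equation*}
When the new term enters the top window, it replaces the smallest window element, and the window itself grows by about $\delta_n$ elements per step. I would prove, by induction on $n$, the invariant
\begin{equation*}
T_n\le K\gamma k \quad\text{and}\quad S_n\le K'\gamma k\bigl(1+\log(n/k)\bigr)\ \text{or similar},
\end{equation*}
with $K\asymp 1/\varepsilon_k$. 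The invariant on $T_n$ gives $b_{n+1}\le \varepsilon_n S_n/n+T_n/(\delta_n n)\le C\gamma/(\varepsilon_k\delta_k)$, because $n\ge k$ and $\delta_n n\ge\delta_k k$ by monotonicity. The heuristic behind the $1/\varepsilon$ factor is a generation count. Consider one ``generation'' of roughly $\delta_n n$ steps, the time needed to refill the window with new terms. During it, each new term is at most $(1-\varepsilon)$ times the current top average plus $\varepsilon$ times the small global mean. Hence the top mass can be amplified by at most a geometric series $\sum_t(1-\varepsilon)^t\approx 1/\varepsilon$ before the $\varepsilon$-leakage toward the mean dominates.

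\emph{Main obstacle.} Making this generation argument rigorous is the hard step, since the top average is \emph{not} monotone. Adding a term below the current top average can still raise the top-$j$ sum; for example, the window $\{1,0\}$ with new term $1/2$ gives a larger top-$2$ sum. My first attempt would be to compare with an explicit continuum model. Let $\rho(t)$ be the density of values above a level, and write the ODE for the window's value distribution under the extremal rule. Then show that the supremum of new values solves a linear recursion of the form $M\mapsto(1-\varepsilon)M+\varepsilon\cdot(\text{mean})$, together with the constraint that the initial ones occupy only a $\gamma/\delta_k$ fraction of the window. Failing that, I would bound $T_{n+1}-T_n\le \max(0,b_{n+1}-\text{(window minimum)})$ and sum these increments over dyadic blocks $n\in[2^ik,2^{i+1}k]$, using that $\varepsilon_n,\delta_n$ vary only by constant factors across such blocks for $k\ge C$.
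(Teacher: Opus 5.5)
Your reduction to the extremal initialization ($\gamma k$ ones, the rest zeros) via \cref{lem:majorization} and your treatment of the trivial range $\gamma\ge\varepsilon_k\delta_k/C$ are fine and match the paper. The core estimate, however, is missing, and the invariants you propose to prove by induction are false. In the extremal recursion the top-window average is at least the global average, so $b_{n+1}\ge\bar b_n$. Hence $\bar b_n$ is nondecreasing, $\bar b_n\ge\bar b_k=\gamma$, and every new term satisfies $b_j\ge\gamma$. This has two consequences:
\begin{itemize}
\item $S_n\ge\gamma n$, which eventually exceeds $K'\gamma k(1+\log(n/k))$.
\item Once $n-k\ge\delta_n n$, the whole top window consists of entries $\ge\gamma$. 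Then $T_n\ge\gamma\delta_n n$, which exceeds $K\gamma k$ for all large $n$ because $\delta_n n\to\infty$.
\end{itemize}
So no $n$-independent bound on the top-window mass $T_n$ can hold. What stays bounded is the top-window average, essentially the running maximum. Your dyadic-block fallback also aims at bounding $T_n$, so it cannot succeed as stated, and the ``generation count'' remains a heuristic.

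The missing idea is a discrete scalar comparison sequence that separates the $\gamma k$ initial ones from everything else. Set $u_1=\cdots=u_k=0$ and $u_{n+1}=\varepsilon_n\bar u_n+(1-\varepsilon_n)u_n+\gamma k/(\delta_n n)$. Suppose $b_j\le u_j$ for $k<j\le n$ and $u$ is nondecreasing. Then, apart from the initial ones, every entry is at most $u_n$. The top-window sum is therefore at most $\gamma k+(\delta_n n-\gamma k)u_n$, using $\gamma k\le\delta_n n$ in the nontrivial range, and also $n\bar b_n\le\gamma k+n\bar u_n$. Plugging these in gives $b_{n+1}\le u_{n+1}$. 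This completely sidesteps the non-monotonicity of the top average that you flagged.

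The crude bound $u_{n+1}\le u_n+\gamma k/(\delta_n n)$ sums to something divergent, so you must exploit how far the mean lags behind $u$. The quantity $\Delta_n\triangleq n(u_n-\bar u_n)$ satisfies $\Delta_{n+1}=(1-\varepsilon_n)\Delta_n+\gamma k/\delta_n$. From this:
\begin{itemize}
\item $\Delta_n\le\gamma k/(\varepsilon_n\delta_n)$.
\item $\Delta_n$ is nondecreasing, hence so is $u$, since $u_{n+1}-u_n=(\Delta_{n+1}-\Delta_n)/n$.
\item Summation by parts gives $u_{n+1}=\Delta_{n+1}/n+\sum_{t=k+1}^{n}\Delta_t/(t(t-1))\lesssim\gamma/(\varepsilon_k\delta_k)$, because $\varepsilon_t\delta_t$ decays only polylogarithmically.
\end{itemize}
The equilibrium value $\gamma k/(\varepsilon_n\delta_n)$ of $\Delta_n$ is the rigorous form of your geometric series $\sum_t(1-\varepsilon)^t\approx1/\varepsilon$. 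Your ``first attempt'' recursion $M\mapsto(1-\varepsilon)M+\varepsilon\cdot(\text{mean})$ points in this direction. It lacks the injection term $\gamma k/(\delta_n n)$ and the $\Delta_n$ analysis that closes the argument.
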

	
\begin{proof}  
	By \cref{lem:majorization}, we assume without loss of generality that the initialization $\set{a_n}_{n=1}^k$ is comprised of $\gamma k$ ones and $(1-\gamma)k$ zeroes, as this sequence majorizes all others in $[0,1]$ with the same mean. Since the recursion is monotone with respect to majorization, any upper bound proved for this dominating sequence automatically applies to the original sequence.

	Set $C$ to be a large constant to be specified in the argument below. Define a sequence $\set{u_n}_{n = 1}^\infty$ by setting $u_1 = \cdots = u_k = 0$ and letting 	\begin{equation*}
		u_{n+1} = \varepsilon_n \bar{u}_n + (1-\varepsilon_n)u_n + \frac{\gamma k}{\delta_n n} \text{ for all } n \geq k.
		\end{equation*}

	\begin{claim}\label{claim:increasing-u}
		The sequence $\set{u_n}_{n=1}^\infty$ is non-decreasing and 
		\begin{equation*}
			\lim_{n \to \infty} u_n \leq  C\frac{\gamma}{\varepsilon_k\delta_k}
		\end{equation*}
	\end{claim}
	Assuming \cref{claim:increasing-u}, to finish the proof of \cref{lem:technical-helper}, it suffices to prove  $a_n \leq u_n$ for all $n > k$. This follows from an inductive argument. In what follows, we verify the base case and the inductive step simultaneously. Suppose that $a_m \leq u_m$ for all $k < m \leq n$. Then,
	\begin{equation*}
		\bar{a}_n  = \frac{k\bar{a}_k + a_{k+1} + \cdots + a_n}{n} \leq \frac{k\bar{a}_k + u_{k+1} + \cdots + u_n}{n} =\frac{n \bar{u}_n + \gamma k}{n}.
	\end{equation*} 
	Observe this bound also holds for the base case $n = k$. Continuing,
	\begin{equation*}
		\bar{a}_{\delta_n n}^{(n)}=\frac{\gamma k  +a_{\gamma k + 1}^{(n)} + \cdots + a_{\delta_n n}^{(n)}}{\delta_n n} \leq \frac{\gamma k + (\delta_n n - \gamma k) u_n}{\delta_n n}.
	\end{equation*}	
	This bound also holds for the base case $n = k$. Combining the two bound, we get
		\begin{align*}
		a_{n+1} &= \varepsilon_n \bar{a}_n + (1-\varepsilon_n)\bar{a}_{\delta_n n}^{(n)}  \\
				&\leq \varepsilon_n \frac{n \bar{u}_n + \gamma k}{n} + (1-\varepsilon_n) \frac{(\delta_n n -\gamma k)u_n + \gamma k}{\delta_n n} \\
				&= \varepsilon_n\bar{u}_n + (1-\varepsilon_n)u_n + \frac{(1-\varepsilon_n + \varepsilon_n \delta_n) \gamma k}{\delta_n n} \leq u_{n+1},
	\end{align*}
	which proves $a_{n+1} \leq u_{n+1}$ and hence the \cref{lem:technical-helper}. Now it remains to prove \cref{claim:increasing-u}.
	
	\begin{proof}[Proof of \Cref{claim:increasing-u}]
	Define $\Delta_n \triangleq  n(u_n - \bar{u}_n)$. Then, the recurrence is rewritten as
	\begin{align*}
		\Delta_{n+1} = (n+1)(u_{n+1} &- \bar u_{n+1}) = n u_{n+1} - n\bar{u}_n \\
					 &= \varepsilon_nn\bar{u}_n + (1-\varepsilon_n)nu_n + \frac{\gamma k}{\delta_n} - n\bar {u}_n=(1-\varepsilon_n)\Delta_n + \frac{\gamma k}{\delta_n}.
	\end{align*}
	We first show that this recursive formula implies that
	\begin{equation}\label{eq:Delta-bound}
		\Delta_n \leq \frac{\gamma k}{\varepsilon_n \delta_n}.
	\end{equation}
	We prove this bound by induction. The base case $j \leq k$ follows from $\Delta_j = 0$. Now 
	\begin{equation*}
		\Delta_{n+1} = (1-\varepsilon_{n})\Delta_n + \frac{\gamma k}{\delta_n}\leq (1-\varepsilon_n)\frac{\gamma k}{\varepsilon_n\delta_n}  + \frac{\gamma k}{\delta_n}= \frac{\gamma k}{\varepsilon_n \delta_n} \leq \frac{\gamma k}{\varepsilon_{n+1}\delta_{n+1}}.
	\end{equation*}
	Here we used $\varepsilon_{n+1}\delta_{n+1} \leq \varepsilon_n \delta_n$, which holds as long as $C$ is a sufficiently large constant. We now show that $\set{\Delta_n}_{n = k+1}^\infty$ is non-decreasing.  This follows immediately from the identity
	\begin{equation}\label{eq:Delta-increasing}
		\Delta_{n+1} = \Delta_n - \varepsilon _n\Delta_n + \frac{\gamma k}{\delta_n} \geq \Delta_n.
	\end{equation} 
	Now we translate (\ref{eq:Delta-bound}) and (\ref{eq:Delta-increasing}) to bounds on the sequence $u_n$ using the identity 
	\begin{equation*}
		\frac{\Delta_{n+1} - \Delta_n}{n} = \frac{(n+1)u_{n+1} - (n+1)\bar{u}_{n+1} -nu_n + n\bar{u}_n}{n} = u_{n+1} - u_n.
	\end{equation*}
	Moreover, (\ref{eq:Delta-increasing}) immediately implies that $\set{u_n}_{n =1}^\infty$ is non-decreasing. Now
	\begin{equation*}
		u_{n+1} = u_n + \frac{\Delta_{n+1} - \Delta_n}{n} \implies u_{n+1} = \frac{\Delta_{n+1}}{n} + \sum_{t = k+1}^{n}\frac{\Delta_t}{t(t-1)}
	\end{equation*}
	Applying the bound (\ref{eq:Delta-bound}) to this, we get  
	\begin{equation*}
		u_{n+1} \leq \frac{\gamma k}{\varepsilon_{n+1}\delta_{n+1} n} + \sum_{t = k+1}^\infty \frac{\gamma k}{\varepsilon_t \delta_t t (t-1)}.
	\end{equation*}
	If $C$ is a sufficiently large constant, we have $\varepsilon_{n+1}\delta_{n+1}n \geq \varepsilon_k \delta_k  k$ for all $n \geq k$, so the first term is bounded by $\gamma /\varepsilon_k \delta_k$. We bound the second term by splitting the sum into two parts 
		\begin{equation*}
		\sum_{t = k+1}^\infty\frac{1}{\varepsilon_t\delta_t t(t-1)} \leq 
		\sum_{t = k+1}^{k^2}\frac{1}{\varepsilon_t\delta_t t(t-1)} + \sum_{t = k^2}^{\infty}\frac{1}{\varepsilon_t\delta_t t(t-1)}.
	\end{equation*}
	Since $\varepsilon_t$ and $\delta_t$ decay polylogarithmically,  there exists a small constant $c$ such that   	
	 $\varepsilon_t \delta_t \geq c\varepsilon_k \delta_k$ for $t \leq k^2$ and    $\varepsilon_t \delta_t t (t-1) \geq ct^{3/2}$ for $t\geq k^2$. Hence,
	\begin{align*}
		\sum_{t = k+1}^{k^2}\frac{1}{\varepsilon_t\delta_t t(t-1)} + \sum_{t = k^2}^{\infty}\frac{1}{\varepsilon_t\delta_t t(t-1)} &\leq \frac{1}{c\varepsilon_k\delta_k}\sum_{t = k}^\infty \frac{1}{t^2} + \frac{1}{c}\sum_{t = k^2}^\infty \frac{1}{t^{3/2}} \leq (C-1)\frac{1}{\varepsilon_k\delta_k k}.
	\end{align*}
	In the last inequality, we chose $C$ to be a large enough constant. Substituting this into the bound for $u_{n+1}$ above, we conclude the proof.
	
	 	\end{proof}
	Having proved \cref{claim:increasing-u}, we conclude the proof of \cref{lem:technical-helper}.
	\end{proof}

\begin{lemma}\label{lem:technical-convergence-helper}
Suppose $\varepsilon_n = A(\log n)^{-\alpha}$ and $\delta_n = B(\log n)^{-\beta}$ with $\alpha, \beta \geq 0$. There exist constants $c$ and $C$ such that the following holds. Suppose $k \geq C$  and let $\set{a_n}_{n=1}^\infty$ be a sequence with initialization $a_1, \ldots, a_k \in [0,1]$  satisfying
\begin{equation*}
		a_{n+1} = \varepsilon_n \bar{a}_n + (1-\varepsilon_n)\bar{a}^{(n)}_{\delta_n n} \text{ for all } n \geq k.
\end{equation*} 
Suppose $\bar{a}_k\leq \gamma \leq \frac{3}{4}$. Then, for all $n > k$ we have 
\begin{equation*}
		a_n \leq 
		\begin{cases}
			C \,{\gamma}/{\varepsilon_k \delta_k} &\text{ for } \gamma \leq 3/4, \\
			1 - c\,{\varepsilon_k \delta_k}/{\gamma} & \text{ for } \gamma \geq \varepsilon_k\delta_k/16C
		\end{cases}
	\end{equation*} 
\end{lemma}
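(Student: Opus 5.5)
The first case, $\gamma \le 3/4$ with bound $C\gamma/\varepsilon_k\delta_k$, is exactly \cref{lem:technical-helper}, which only needs $\bar a_k \le \gamma$. So the work is the second case, $\gamma \ge \varepsilon_k\delta_k/16C$. Here I would prove $1-a_n \ge c\,\varepsilon_k\delta_k/\gamma$. The plan has three steps: reduce to an extremal initialization, handle an early phase, then run a lower comparison sequence for $b_n \triangleq 1-a_n$.

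\textbf{Reduction.} As in \cref{lem:technical-helper}, \cref{lem:majorization} lets me assume the initialization is $\gamma k$ ones followed by $(1-\gamma)k$ zeroes. Any upper bound for $a_n$ in this case transfers to the general sequence. Throughout I use the one-step bound
\[
a_{n+1}\le \varepsilon_n\bar a_n+(1-\varepsilon_n)\cdot 1 ,
\qquad\text{i.e.}\qquad
b_{n+1}\ge \varepsilon_n \bar b_n .
\]

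\textbf{Early phase, $k\le n\le n_0\triangleq \gamma k/\delta_{n}$.} In this range the top $\delta_n n$ terms may all equal $1$, so the top average gives nothing. Using only the crude bound $a_m\le 1$, I get $\bar a_n \le (\gamma k+(n-k))/n$. Hence
\[
\bar b_n\ge (1-\gamma)k/n\ge (1-\gamma)\delta_n/\gamma\ge \delta_n/(4\gamma),
\]
and so $b_{n+1}\ge \varepsilon_n\delta_n/4\gamma$. For $n\le n_0$ we have $\log n\approx \log k$, since $n_0\le k/\delta_k$ is polylogarithmically larger than $k$. Therefore $\varepsilon_n\delta_n\ge c\,\varepsilon_k\delta_k$ in this range.

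\textbf{Late phase, $n>n_0$.} Now $\delta_n n>\gamma k$, so the top $\delta_n n$ window contains at most $\gamma k$ exact ones; the remaining terms are generated terms. I would mirror \cref{claim:increasing-u} with a lower comparison sequence $v_n$ for $b_n$. Let $\eta_n$ denote the minimum of $b_m$ over generated terms $m\le n$. The bottom-$b$ average over the window is then at least $(1-\gamma k/\delta_n n)\,\eta_n$. Combined with $\bar b_n\ge (1-\gamma)k/n+(\text{generated contribution})/n$, this gives a recursion
\[
v_{n+1}=\varepsilon_n\bar v_n+(1-\varepsilon_n)\Bigl(1-\tfrac{\gamma k}{\delta_n n}\Bigr)v_n+\varepsilon_n\tfrac{(1-\gamma)k}{n}.
\]
I would analyze it through $\Delta_n$-type quantities, as in the proof of \cref{claim:increasing-u}. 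The loss factor $\prod_{n>n_0}(1-\gamma k/\delta_n n)$ is harmless: $\sum_{n>n_0}\gamma k/(\delta_n n^2)\lesssim \gamma k/(\delta_{n_0}n_0)=O(1)$, using $\delta_n n$ nondecreasing. So $v_n$ stays at least a constant times its value $\approx \varepsilon_k\delta_k/4\gamma$ at time $n_0$. The averaging terms only pull $v$ toward $\bar v$, which is itself at least the same order.

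\textbf{Main obstacle.} The hard step is the late phase: showing that the slowly decaying source $\varepsilon_n(1-\gamma)k/n$ together with the multiplicative loss $\gamma k/\delta_n n$ cannot drive the minimum of $b$ below $c\,\varepsilon_k\delta_k/\gamma$. I would first try to prove by induction that $b_m\ge \eta\triangleq c\,\varepsilon_k\delta_k/\gamma$ for all generated $m$. The inductive step would use $b_{n+1}\ge \varepsilon_n\bar b_n+(1-\varepsilon_n)(1-\gamma k/\delta_n n)\eta$, together with the lower bound $\bar b_n\ge (1-\gamma)k/n+\eta(n-k)/n$. The difficulty is absorbing the deficit $(1-\varepsilon_n)\eta\gamma k/\delta_n n$ into the gain $\varepsilon_n(\bar b_n-\eta)$. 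If this pointwise absorption fails, I would fall back on a telescoping, $\Delta$-style summation exploiting $\sum_{n>n_0}\gamma k/(\delta_n n^2)=O(1)$. The hypothesis $\gamma\ge\varepsilon_k\delta_k/16C$ only serves to make $\eta\le 1/16$ small, so that constants close.
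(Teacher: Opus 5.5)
Your first case and early phase are fine, but the late phase, which you rightly call the crux, is not actually proved, and the estimate you offer for it is wrong. The product $\prod_{n>n_0}(1-\gamma k/\delta_n n)$ is not bounded below. It is controlled by $\sum_{n>n_0}\gamma k/(\delta_n n)$, not by $\sum\gamma k/(\delta_n n^2)$, and that sum diverges: each dyadic block contributes order $\gamma k/\delta_n$.

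Even with correct $\Delta$-type bookkeeping as in \cref{claim:increasing-u}, the window deficit $(1-\varepsilon_n)\gamma k\,v_n/(\delta_n n)$ is recycled through the weight $1-\varepsilon_n$ on $v_n$. It is therefore amplified by $1/\varepsilon_n$; compare $\Delta_n\approx\gamma k/\varepsilon_n\delta_n$ there. The relative loss accumulated after time $N$ is of order $\sum_{n>N}\gamma k/(\varepsilon_n\delta_n n^2)\approx\gamma k/(\varepsilon_N\delta_N N)$. At your $N=n_0=\gamma k/\delta$ this is about $1/\varepsilon_k\gg1$. So a telescoping argument cannot start at $n_0$; it only becomes harmless once $\varepsilon_N\delta_N N\gg\gamma k$.

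Your comparison also silently replaces the running minimum $\eta_n$ by $v_n$. That would require $v$ to be non-increasing, and nothing in your setup guarantees it.

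Your first option, the induction $b_m\ge\eta$, does close, but only on a bounded range. With your two bounds the factors $k/n$ cancel, and the step holds iff $\eta\le\varepsilon_n\delta_n(1-\gamma)/(\varepsilon_n\delta_n+(1-\varepsilon_n)\gamma)$. This fails once $\varepsilon_n\delta_n$ drops well below $c\,\varepsilon_k\delta_k$, which happens eventually whenever $\alpha+\beta>0$.

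The missing idea is to use the two mechanisms in sequence. Switch at an $m$ with $\varepsilon_m\delta_m m=Kk$ and still $\varepsilon_m\delta_m\ge\varepsilon_k\delta_k/K^2$, which polylogarithmic decay allows. This is exactly the paper's proof:
\begin{enumerate}
\item Freeze $\varepsilon_m,\delta_m$. The $n$-independent induction then gives $a_n\le\lambda$ for all $k<n\le m$, so no separate early phase is needed.
\item Majorize $a_1,\dots,a_m$ by $\gamma k$ ones and $m-\gamma k$ copies of $\lambda_+=1-\varepsilon_m\delta_m/(100C\gamma)$.
\item Apply an affine map sending $\lambda_+\mapsto0$, and restart \cref{lem:technical-helper} at time $m$ with mean $\gamma k/m$. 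It bounds the rescaled terms by $K\gamma k/(\varepsilon_m\delta_m m)=\gamma\le3/4$, i.e.\ $a_n\le\lambda_++\tfrac34(1-\lambda_+)$ for all $n>m$.
\end{enumerate}
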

	
\begin{proof}Let $K$ be the constant from \cref{lem:technical-helper}. Fix $c$ and $C$ to be some small and large constant, respectively. 	Since $k \geq C$ and $\varepsilon_n$ and $\delta_n$ satisfy logarithmic decay, we can take $C$ large enough so that 
	 \begin{enumerate}
	 	\item both $\set{\varepsilon_n}_{n=k}^\infty$ and $\set{\delta_n}_{n=k}^\infty$ are non-increasing and at most $\frac{1}{2}$;
	 	\item the sequence $\set{\varepsilon_n \delta_n n}_{n=k}^\infty$ is increasing;
	 	\item if $m$ is chosen so that $\varepsilon_m\delta_mm \triangleq  K k $ then 
	 \begin{equation*}
		\varepsilon_m \geq \frac{\varepsilon_k}{K} \text{ and } \delta_m \geq \frac{\delta_k}{K}.
	\end{equation*}
	 \end{enumerate}
	 We now begin the proof.  \Cref{lem:technical-helper} shows that for all $n > k$ we have $a_n \leq K \gamma /(\varepsilon_k \delta_k)$. Taking $C \geq K$, we obtain the first part of the desired bound. In what follows, we assume $\gamma \geq \varepsilon_k\delta_k/16C$. First, we use induction to show that  for all $k < n \leq m$ 
	\begin{equation*}
		a_n \leq \lambda \triangleq 1 - \frac{\varepsilon_m \delta_m  (1-\gamma)}{\varepsilon_m \delta_m + (1-\varepsilon_m)\gamma}.
	\end{equation*}
	We verify the base case and the inductive step simultaneously. Assume $a_j \leq \lambda$ for all $k <j \leq n$. Since $n \leq m$, we get
	\begin{align*}
		a_{n+1} &= \varepsilon_n \bar{a}_n + (1-\varepsilon_n)\bar a_{\delta_n n}^{(n)} \\
				&\leq \varepsilon_m \bar{a}_n +(1-\varepsilon_m)\bar a_{\delta_m n}^{(n)}  \\
				&\leq \varepsilon_m\frac{\gamma k + \lambda(n-k)}{n} + (1-\varepsilon_m)\frac{\gamma k + \lambda(\delta_m n - \gamma k)}{\delta_m n} \leq \lambda. 
	\end{align*}
	Observe that this bound also holds for the base case $n = k$, so the induction is complete. Next, we recall that by assumption  $\varepsilon_k\delta_k/16C <\gamma \leq 3/4$, so
		\begin{equation*}
		\frac{\varepsilon_m \delta_m  (1-\gamma)}{\varepsilon_m \delta_m + (1-\varepsilon_m)\gamma} \geq \frac{1}{4}\frac{\varepsilon_m \delta_m}{\varepsilon_m\delta_m + \gamma} \geq  \frac{1}{4 (16 C + 1) }\frac{\varepsilon_m\delta_m}{\gamma  } \geq \frac{\varepsilon_m \delta_m}{100C \gamma }
	\end{equation*}
	Since our goal is to obtain an upper bound on the terms of the sequence, it suffices to establish the bound for a simpler sequence that dominates the original one. More precisely, by applying  \cref{lem:majorization} we may assume without loss of generality that among $\set{a_n}_{n=1}^m$ there are $\gamma k$ ones, and the remaining terms are equal to $\lambda_+$ where  
		\begin{equation*}
		\lambda_+ \triangleq  1- \frac{\varepsilon_m\delta_m}{100C\gamma} \geq  1 - \frac{\varepsilon_m \delta_m(1-\gamma)}{\varepsilon_m \delta_m + (1-\varepsilon_m)\gamma } = \lambda.
	\end{equation*}
	By affine invariance of the recurrence, we may shift and rescale the sequence so that $\lambda_+$ is sent to $0$ and $1$ is sent to $1$. Applying \cref{lem:technical-helper} in this scale and then undoing the affine transformation yields the desired bound for the original sequence. More precisely, we conclude that for all $n > k$ 	
	\begin{equation*}
		a_n \leq \lambda_++(1-\lambda_+)K\frac{\gamma k}{\varepsilon_m\delta_m m} \leq \lambda_+ + (1-\lambda_+) \times \frac{3}{4} = 1-\frac{\varepsilon_m\delta_m}{400C\gamma} \leq 1-c\frac{\varepsilon_k \delta_k}{\gamma}.
	\end{equation*}
	In the last step, we chose $c = 1/(400CK^2)$.
	\end{proof}
	
We are now ready to prove \cref{prop:technical-convergence}.

\begin{proof}[Proof of \cref{prop:technical-convergence}]

	We will apply majorization lemma (\cref{lem:majorization}) and reverse majorization lemma (\cref{lem:reverse-majorization}) to reduce to \cref{lem:technical-convergence-helper}.
	Since our goal is to obtain an upper bound on the terms of the sequence, it suffices to establish the bound for a simpler sequence that dominates the original one. To this end, we apply \cref{lem:majorization} to assume without loss of generality that all terms originally lying in the interval $[B,U]$ are moved to one of the endpoints $\set{B,U}$ in a way that preserves both the average and the number of terms lying outside the interval.
	
	We may apply \cref{lem:majorization} once again to replace all terms not in $\set{B,U}$ by $1$. At this point, every term in the initialization $\set{a_n}_{n=1}^k$  takes one of the values $\set{B,U,1}$. Note that this step alters the average.  We will quantify this change and show that it affects the situation only marginally.	
	
	Denote by $k_B$ the number of terms equal to $B$, and similarly define $k_U$ and $k_1$. Let $U_+$ be the average of the terms are equal to $U$ or $1$, and let $\gamma_+$ be the fraction of terms taking values in $\set{U,1}$. Thus,	
	\begin{equation*}
	     U_+ \triangleq \frac{k_UU + k_1}{k_U + k_1} \text{ and }\gamma_+ \triangleq  \frac{k_U+k_1}{k}.
	\end{equation*}
	We first show that $U_+ \approx U$ and $\gamma_+ \approx \gamma$. 
	\begin{claim}\label{claim:helper}
		With the notation above, we have 
		\begin{equation*}
		  (1-c)\gamma \leq \gamma_+ \leq (1 + c)\gamma \text{ and } 	U \leq U_+ \leq U + 2c(U-B)\min\set{1,\frac{\varepsilon_k \delta_k}{\gamma}}
		\end{equation*}
	\end{claim}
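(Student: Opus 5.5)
We prove \cref{claim:helper} by bookkeeping on counts and sums, tracking the two majorization moves separately. Write $N \triangleq c k (U-B)\min\set{\gamma, \varepsilon_k\delta_k}$ for the bound on the number of terms outside $[B,U]$. Since the initialization lies in $[0,1]$, we have $U-B \le 1$.

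\emph{The first move.} All terms in $[B,U]$ are pushed to the endpoints while preserving their total sum. The outside terms are untouched, so after this step the average is still $\bar a_k = \gamma U + (1-\gamma)B$, and at most $N$ terms lie outside $\set{B,U}$.

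\emph{The second move.} Each outside term is replaced by $1$, giving $k_1 \le N$. Let $k_U^{0}$ denote the number of terms equal to $U$ after the first move.

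\emph{Bounds on $\gamma_+$.} Write $k\bar a_k = k_U^0 U + k_B B + \Sigma_{\text{out}}$, where $\Sigma_{\text{out}}$ is the sum of the (at most $N$) outside terms. Since $k_U^0 + k_B = k - k_{\text{out}}$, subtracting $kB$ gives
\begin{equation*}
k\gamma(U-B) = k_U^0 (U-B) + \sum_{\text{out}} (a_j - B).
\end{equation*}
Each outside summand satisfies $\abs{a_j - B} \le 1$, so $\abs{k_U^0 - \gamma k}(U-B) \le N$. This gives $\abs{k_U^0 - \gamma k} \le c k \gamma$, using $N \le c k (U-B)\gamma$. Since $k_U = k_U^0$ and $0 \le k_1 \le N \le c k\gamma$, it follows that $\gamma_+ = (k_U + k_1)/k$ lies within $2c\gamma$ of $\gamma$. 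This gives the stated bound after renaming $c$ (or shrinking $c$ in the hypothesis by a factor of $2$).

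\emph{Bounds on $U_+$.} The lower bound $U_+ \ge U$ is immediate, since $U_+$ averages values in $\set{U,1}$ and $U \le 1$. For the upper bound,
\begin{equation*}
U_+ - U = \frac{k_1(1-U)}{k_U + k_1} \le \frac{N}{\gamma_+ k} \le \frac{c k (U-B)\min\set{\gamma,\varepsilon_k\delta_k}}{(1-c)\gamma k} \le 2c(U-B)\min\set{1, \frac{\varepsilon_k\delta_k}{\gamma}},
\end{equation*}
provided $c \le 1/2$.

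\emph{Main obstacle.} The only delicate point is the accounting in the first bound. One must notice that outside terms can lie below $B$ as well as above $U$. The key observation is that each such term perturbs the normalized sum by at most $1$, so the count $N$ directly controls the deviation of $k_U^0$ from $\gamma k$. The $(U-B)$ factor built into $N$ exactly cancels the division by $U-B$.
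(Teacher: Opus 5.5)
This is the paper's argument in substance. Your lower bound on $\gamma_+$ (from $k_U(U-B)\ge k\gamma(U-B)-N$) and your bound on $U_+$ (from $k_1/(k_U+k_1)$ together with $\gamma_+\ge(1-c)\gamma$) are the same steps the paper takes.

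There is one discrepancy. For the upper bound on $\gamma_+$ you bound $k_U-\gamma k$ and $k_1$ separately, which gives $(1+2c)\gamma$ rather than the stated $(1+c)\gamma$. ``Renaming $c$'' does not prove the claim as written, because the same $c$ appears in the hypothesis. The loss comes from throwing away a cancellation: a term above $U$ lowers $k_U$ in exactly the case where it raises $k_1$.

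The paper avoids this by applying Markov's inequality to the modified sequence, which takes values in $\set{B,U,1}$ and has sum at most $kB+k\gamma(U-B)+k_1$. Your own identity gives the same thing directly. Add $k_1(U-B)$ to both sides and use $U-a_j\le 1$:
\begin{equation*}
(k_U+k_1)(U-B)=k\gamma(U-B)+\sum_{\text{out}}(U-a_j)\le k\gamma(U-B)+k_1<(1+c)\gamma k(U-B).
\end{equation*}
This yields $\gamma_+<(1+c)\gamma$ with no loss. Your weaker constant would still be enough for how the claim is used later in the proof of \cref{prop:technical-convergence}.
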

	\begin{proof}
	Since the number of terms outside the interval $[B,U]$ has not changed, we have $k_1 \leq c\gamma k(U-B)$. Moreover, by replacing at most $k_1$ terms by $1$, the total sum can increase by at most $k_1$. Therefore,
		\begin{align*}
			\bar{a}_k \leq 	\gamma U + (1-\gamma)B + \frac{k_1}{k} \leq (1 + c) \gamma U + (1-(1+c)\gamma)B. 
		\end{align*}
		By Markov inequality, this implies the upper bound $\gamma_+ \leq (1+c)\gamma$.  
		
		\medskip
		
	 	For the lower bound on $\gamma_+$, note that replacing some terms by $1$ can only increase the average. Hence,
	 	\begin{align*}
	 		Bk + \gamma k (U-B) \leq k\bar{a}_k &= Uk_U + Bk_B + k_1 \\
	 						   				  &\leq k_U(U-B) +Bk +k_1 \\
	 						   				  &\leq k_U(U-B) + Bk + c\gamma k(U-B)
	 	\end{align*}
	 	In particular, this implies $\gamma_+ \geq \frac{k_U}{k} \geq (1-c)\gamma$. Finally, observe that lower bound on $U_+$ is trivial, while upper bound follows from
	 	\begin{align*}
	 		U_+ \leq \frac{Uk_U + k_1}{k_U}  &\leq U + \frac{c k(U-B)}{(1-c)\gamma k}\min\set{\gamma, \varepsilon_k\delta_k} \\
	 		&\leq U + 2c(U-B)\min\set{1,\frac{\varepsilon_k \delta_k} {\gamma}}.
	 	\end{align*}
	 	Here we chose $c < 1/2$. 
	\end{proof}

	Let us now collect the all terms taking value in $\set{U,1}$ into their average $U_+$. By \cref{lem:reverse-majorization}, this operation does not affect the future evolution of the sequence, provided that the newly generated terms remain bounded by $U$. In particular, as long as we establish the bound $a_n \leq U$ for all $n > k$, which is weaker than the conclusion of the proposition, we may apply reverse majorization without loss of generality.	\medskip
	
	In this transformed sequence, we have $a_1, \ldots, a_k \in \set{B, U_+}$, with the fraction of terms equal to $U_+$ being exactly $\gamma_+$. By \cref{claim:helper}, we have $\gamma_+\leq (1+c)\gamma \leq 3/4$.
	
	In particular, after shifting and rescaling the sequence to take values in $\set{0,1}$, we may apply \cref{lem:technical-convergence-helper} and undo the transformation to conclude that there exist constants $c'$ and $C'$, such that for all $n > k$, 
	\begin{align*}
		a_n & \leq 
		B + (U_+ - B)\begin{cases}
			C' \gamma_+ / \varepsilon_k \delta_k  & \text{ for } \gamma_+ \leq 3/4,\\
			1-c'\varepsilon_k \delta_k / \gamma_+ & \text{ for } \gamma_+ \geq \varepsilon_k \delta_k /16C'
		\end{cases}
	\end{align*}	
	It now suffices to bound the right-hand side, for which we invoke \cref{claim:helper} again. First, whenever $\gamma \leq 1/2$, we have $\gamma_+ \leq 3/4$. Hence, for  $C=4C'$ we obtain 
		\begin{align*}
		a_n \leq  B+ C' \frac{\gamma_+}{\varepsilon_k \delta_k}(U_+ - B) \leq B + 4C'\frac{\gamma }{\varepsilon_k\delta_k}(U-B) = B+C\frac{\gamma}{\varepsilon_k\delta_k}(U-B).
	\end{align*}
	On the other hand, whenever $\gamma \geq \varepsilon_k \delta_k/2C$, we have $\gamma_+ \geq \varepsilon_k \delta_k/16C'$, so we obtain
	\begin{align*}
		a_n &\leq B + (U_+- B)\paren{1-\frac{c' \varepsilon_k\delta_k}{\gamma}} \\
		&\leq B + (U - B)\paren{1+\frac{2c\varepsilon_k\delta_k}{\gamma}}\paren{1-\frac{c' \varepsilon_k\delta_k}{\gamma}} \\
		&\leq B + (U-B)\paren{1-\frac{c\varepsilon_k\delta_k}{\gamma}}.	\end{align*}
	Here we applied $(U_+-U)\leq 2\,c\,{\varepsilon_k\delta_k}(U-B)/\gamma $ and chose $c$ to be a sufficiently small constant relative to $c'$. \end{proof}

\subsection{Tools for Lower Bounds}

In this section, we first establish a helper lemma, which we then lift via majorization to prove \cref{prop:technical-divergence}. The latter is the main tool for lower bounds.

\begin{lemma}\label{lem:technical-helper-2}
	Fix $0 <\varepsilon, \delta, \gamma \leq \frac{1}{2}$ such that $\gamma \geq 2\delta$.  Suppose $\set{a_n}_{n=1}^\infty$ is a sequence with initialization  $a_1 = \cdots = a_{\gamma k} = 1$ and $a_{\gamma k+1} = \cdots = a_{k} = 0$, satisfying
	\begin{equation*}
		a_{n+1} \geq  \varepsilon \bar{a}_n + (1-\varepsilon)\bar{a}^{(n)}_{\delta n} \text{ for all } n\geq k.
	\end{equation*} 
	Then, at least half of the terms up to index	 $\gamma k/2\delta $ satisfy 
	\begin{equation*}
		a_n \geq 1-\varepsilon \paren{\frac{5\delta}{\gamma}}^{1-\varepsilon}.
	\end{equation*}
\end{lemma}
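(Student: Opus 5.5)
The plan is to exploit the fact that, at least until index $\gamma k/\delta$, the original block of $\gamma k$ ones always fills the top $\delta n$ positions. The recursion then collapses to a simple linear inequality for the running deficit from $1$.

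\textbf{Step 1 (top block saturated).} For $k \le n \le \gamma k/\delta$ we have $\delta n \le \gamma k$. Among $a_1,\dots,a_n$ there are therefore at least $\delta n$ terms equal to $1$, namely the initial ones. Hence the average of the $\delta n$ largest terms satisfies $\bar a^{(n)}_{\delta n} \ge 1$. This holds even if later terms exceed $1$, since we only need a lower bound. The hypothesis then gives
\begin{equation*}
a_{n+1} \ge \varepsilon \bar a_n + (1-\varepsilon), \qquad\text{i.e.}\qquad 1-a_{n+1} \le \varepsilon\,(1-\bar a_n).
\end{equation*}

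\textbf{Step 2 (control the deficit).} Let $d_n \triangleq n(1-\bar a_n) = \sum_{j\le n}(1-a_j)$, so that $d_k = (1-\gamma)k \le k$. From Step 1, $d_{n+1} = d_n + (1-a_{n+1}) \le d_n(1+\varepsilon/n)$. If $d_n$ ever becomes negative, the bound for $1-a_{n+1}$ is only better, and the inequality chain still works with $d_n^+$. Iterating,
\begin{equation*}
d_n \le k\prod_{j=k}^{n-1}\Bigl(1+\frac{\varepsilon}{j}\Bigr) \le k\exp\Bigl(\varepsilon\sum_{j=k}^{n-1}\frac1j\Bigr) \le k\,e^{\varepsilon/k}\Bigl(\frac nk\Bigr)^{\varepsilon}.
\end{equation*}
Consequently,
\begin{equation*}
1-a_{n+1} \le \varepsilon\frac{d_n}{n} \le \varepsilon\, e^{\varepsilon/k}\Bigl(\frac kn\Bigr)^{1-\varepsilon}.
\end{equation*}

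\textbf{Step 3 (counting).} We want $1-a_{n+1}\le \varepsilon(5\delta/\gamma)^{1-\varepsilon}$. This holds once $n \ge \gamma k/(5\delta)$, up to the factor $e^{\varepsilon/k}$. That factor is absorbed by working with a slightly smaller threshold such as $\gamma k/(4\delta)$, or by noting that the slack between $5$ and the needed constant covers it since $\varepsilon\le 1/2$ and $k$ is large. These indices lie in the window $\gamma k/(5\delta) \le n \le \gamma k/(2\delta) \le \gamma k/\delta$, so Step 1 applies throughout. Write $N = \gamma k/(2\delta)$. The window then contains at least $N - \tfrac{2}{5}N = \tfrac35 N \ge \tfrac12 N$ indices, which gives the claim. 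The assumption $\gamma\ge 2\delta$ guarantees $N \ge k$, so the window lies past the initialization.

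\textbf{Main obstacle.} The only delicate point is the bookkeeping in Step 2. We must make sure the product bound with the $e^{\varepsilon/k}$ correction, together with integrality and the off-by-one between $a_{n+1}$ and $n$, fits inside the constant $5$. I expect this to be routine: choose the lower end of the window slightly above $\gamma k/(5\delta)$ and check that it still leaves more than half of the first $N$ indices.
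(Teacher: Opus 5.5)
Your argument is essentially the paper's. In the range $\delta n\le\gamma k$ the initial block of ones fills the top-$\delta n$ average, so $1-a_{n+1}\le\varepsilon(1-\bar a_n)$, and the deficit then decays like $(k/n)^{1-\varepsilon}$. The paper tracks the average deficit $1-\bar a_n$ instead of your total deficit. Its factor $1-\frac{1-\varepsilon}{n+1}$ gives the bound $\bigl(\frac{k+1}{n+1}\bigr)^{1-\varepsilon}$ with no correction term, and it uses the window $(\gamma k/4\delta,\gamma k/2\delta]$. Your version with the $e^{\varepsilon/k}$ factor is an equivalent variant.

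There is one false step: the last sentence of Step 3. The inequality $N\ge k$ only controls the upper end of your window. The lower end $\gamma k/(5\delta)$ (or $\gamma k/(4\delta)$) is at least $k$ only when $\gamma\ge 5\delta$ (resp.\ $4\delta$). Otherwise the window contains initialization indices in $(\gamma k,k]$. There $a_n=0$, and Steps 1--2 say nothing about them, since the recursion only starts at $n=k$.

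This cannot be repaired under the stated hypothesis, because the statement itself fails there. Take $\gamma=2\delta<\tfrac12$. The terms up to index $\gamma k/2\delta=k$ are exactly the initialization, and only a $\gamma<\tfrac12$ fraction of them equal $1$. The zeros violate the bound, since $\varepsilon(5/2)^{1-\varepsilon}\le\tfrac12\sqrt{5/2}<1$ for $\varepsilon\le\tfrac12$.

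The paper's proof makes the same tacit assumption. Its $n_0=\gamma k/4\delta$ must be at least $k$ for the deficit bound and the recursion to apply on $[n_0,2n_0)$. The correct hypothesis is $\gamma\ge 4\delta$. Under it, your argument goes through on the window $(\gamma k/4\delta,\gamma k/2\delta]$ once $e^{\varepsilon/k}\le(5/4)^{1-\varepsilon}$, which holds for $k\ge 5$. The strengthening costs nothing in \cref{prop:technical-divergence}, where $\gamma/\delta=\delta_k^{-1/2}$ is large.
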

\begin{proof}
	Let $n_0 \triangleq \gamma k / (4\delta)$.  Observe that if $n \leq 2n_0$ then $\delta n \leq 2\delta n_0 \leq \gamma k $. In particular $\bar{a}_{\delta n}^{(n)} = 1$. Thus, as long as $n \leq 2n_0$, we have 
	\begin{equation*}
		a_{n+1} \geq   \varepsilon \bar{a}_n + (1-\varepsilon) \quad \text{ so }\quad   \bar{a}_{n+1} \geq  \frac{(n+\varepsilon)\bar{a}_n + (1-\varepsilon)}{n+1}.
	\end{equation*}
	We can solve this recurrence by setting $\bar{a}_n = 1-d_n$ and observing that 
	\begin{align*}
		d_{n+1} \leq d_n \paren{1 - \frac{1-\varepsilon}{n+1}} \quad \text{ so } \quad d_n &\leq  (1-\gamma)\prod_{j = k+1}^n\paren{1- \frac{1-\varepsilon}{j}} \\
		& \leq \exp\set{(1-\varepsilon)\log\paren{\frac{k+1}{n+1}}}.
	\end{align*}
	Since $\bar{a}^{(n)}_{\delta n} \geq \bar{a}_n$, the sequence $\bar{a}_n$ is non-decreasing. Hence, for all $n \geq n_0$ 
	\begin{equation*}
		\bar{a}_n \geq 1-\paren{\frac{k+1}{n_0+1}}^{1-\varepsilon} \geq 1- \paren{\frac{5\delta }{\gamma }}^{1-\varepsilon}.
	\end{equation*} 
	This means that whenever $n_0 \leq n <2n_0$ we have that 
	\begin{equation*}
		a_{n+1} \geq \varepsilon \bar{a}_n + 1 -\varepsilon \geq 1-\varepsilon \paren{\frac{5\delta }{\gamma}}^{1-\varepsilon}.
	\end{equation*}
	This concludes the proof.
\end{proof}

We are now ready to prove \cref{prop:technical-divergence}.

\begin{proof}[Proof of \cref{prop:technical-divergence}]
Fix a constant $C$ to be specified below. For notational convenience, set $\varepsilon \triangleq \varepsilon_k$, $\delta \triangleq  \delta_k$, and $\gamma \triangleq  \delta_k^{1/2}$. 

By the majorization lemma (\cref{lem:majorization}), we may assume without loss of generality that among the initial terms $\set{a_n}_{n = 1}^k$, a $\gamma$ fraction are equal to $U$, while the remaining terms are equal to $0$. 

If $C$ is chosen sufficiently large, then $0 \leq \varepsilon, \delta, \gamma \leq 1/2$ and $\gamma \geq 2\delta$. Moreover, we may assume that $\varepsilon_n$ and $\delta_n$ are non-increasing for $n \geq k$. In this case, we may rescale the sequence so that all terms take values in $\set{0,1}$, and apply \cref{lem:technical-helper-2} to conclude that, for at least half the terms up to $ k/2\delta^{1/2}$ satisfy
\begin{equation*}
	a_n \geq \sqb{1 - \varepsilon \paren{\frac{5\delta}{\gamma}}^{1-\varepsilon}}U 
\end{equation*}
It now suffices to show that $(5\delta / \gamma)^{1-\varepsilon} \leq C\delta / \gamma$ provided that $C$ is chosen sufficiently large. This follows from the observation that
\begin{equation*}
	\log \set{ \paren{\frac{\gamma}{\delta}}^{\varepsilon} } = \frac{\varepsilon}2 \log \frac{1}{\delta } = A \frac{\beta \log \log k -\log B }{2 (\log k)^\alpha}  = O(1).
\end{equation*}
\end{proof}

\section{Proof of the Main Theorem}

We first apply \cref{prop:technical-convergence} to prove convergence in \cref{thm:convergence}. 

\begin{theorem}\label{thm:convergence}
	Let $A,B > 0$ and $\alpha, \beta \geq 0$ be constants satisfying $\alpha + \beta / 2 \leq 1$. 
	Suppose $\varepsilon_n = A (\log n)^{-\alpha}$ and $\delta_n = B(\log n)^{-\beta}$ and let $\set{a_n}_{n=1}^\infty$ be a sequence with initialization $(a_1, \ldots, a_k)$ that satisfies   
	\begin{equation*}
		a_{n+1} \in \varepsilon_n \bar{a}_n + (1-\varepsilon_n)[\bar a_{\delta_n n}^{[n]}, \bar a_{\delta_n n}^{(n)}] \text{ for all } n \geq k.
	\end{equation*} 
	Then the sequence necessarily converges.  
\end{theorem}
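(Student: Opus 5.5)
The plan is to show that the oscillation of the tail shrinks to zero. By affine invariance and the coordinatewise reduction, assume $a_1,\dots,a_k\in[0,1]$. Every later term lies in the convex hull of the earlier ones, so the running range stays in $[0,1]$. For a scale $k_i$ set
\[
U_i\triangleq\sup_{n>k_i}a_n,\qquad B_i\triangleq\inf_{n>k_i}a_n .
\]
Convergence is equivalent to $U_i-B_i\to0$.

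The inclusion $a_{n+1}\le \varepsilon_n\bar a_n+(1-\varepsilon_n)\bar a^{(n)}_{\delta_n n}$ compares the true sequence with the extremal recursion through \cref{lem:majorization}. Applied to $a$ and to $-a$, it lets us bound the upper envelope by the ``top-averaging'' process of \cref{prop:technical-convergence}, and symmetrically the lower envelope by the reflected process.

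Fix a scale $k=k_i$ and choose the next scale $k'=k_{i+1}$ much larger, for instance $k'=k\,(\varepsilon_k\delta_k)^{-2}$, or more if needed. Then at time $k'$ the terms with index $\le k$ are a negligible fraction of $[k']$. All terms of index in $(k,k']$ lie in $[B,U]=[B_i,U_i]$. This verifies hypothesis 2 of \cref{prop:technical-convergence} at time $k'$. Write $\bar a_{k'}=\gamma U+(1-\gamma)B$.

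If $\gamma\le 1/2$, apply the proposition to the upper process. If $\gamma>1/2$, apply it to the reflected process with $1-\gamma\le1/2$. Either way the corresponding endpoint moves inward by at least $c\,\varepsilon_{k'}\delta_{k'}/\min(\gamma,1-\gamma)\cdot(U-B)$. When $\min(\gamma,1-\gamma)\lesssim\varepsilon\delta$, the first alternative of the proposition gives a constant-factor contraction. So in all cases
\[
U_{i+1}-B_{i+1}\le(1-\theta_i)(U_i-B_i),
\]
and it suffices that $\sum_i\theta_i=\infty$.

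The main obstacle is making $\theta_i$ as large as $\varepsilon\delta^{1/2}\asymp(\log k)^{-(\alpha+\beta/2)}$ rather than merely $\varepsilon\delta$. The crude bound $\gamma\le1/2$ only gives $\theta\asymp\varepsilon\delta$, which sums to infinity over the scales only when $\alpha+\beta\le1$. To reach the sharp threshold I would refine the choice of interval, guided by the Remark after \cref{prop:technical-divergence}.

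Instead of $[B_i,U_i]$, I would take $U$ to be the level above which only a $\delta_{k'}^{1/2}$ fraction of the terms in $(k,k']$ lie. There are then two cases:
\begin{itemize}
\item[] If this level is already close to $U_i$, use it directly with $\gamma\approx\delta^{1/2}$, obtaining $\theta\asymp\varepsilon\delta^{1/2}$.
\item[] If it is far below $U_i$, the few high terms must be absorbed. I would use \cref{lem:reverse-majorization} to collapse them to their average, in the spirit of the proof of \cref{prop:technical-convergence}, so that they count as a $\gamma\approx\delta^{1/2}$ mass at an effective top level.
\end{itemize}

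With $\theta_i\gtrsim(\log k_i)^{-(\alpha+\beta/2)}\ge c/\log k_i$ and scales growing by polylogarithmic factors, so that $\log k_i\asymp i\log\log k_i$, the sum $\sum_i\theta_i$ diverges exactly when $\alpha+\beta/2\le1$, which finishes the proof.
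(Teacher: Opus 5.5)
Your overall architecture matches the paper's: nested intervals, \cref{prop:technical-convergence} applied at the endpoint nearer the average, and divergence of $\sum_i\theta_i$ over polylogarithmically growing scales. You also correctly identify the obstacle: applying the proposition once at a single time $k'$ contracts only by $c\,\varepsilon\delta/\gamma$, which is of order $\varepsilon\delta$ when $\gamma$ is of constant order. The repair you sketch does not work, for three reasons.

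First, if $U$ is the level exceeded by a $\delta^{1/2}$ fraction of the terms, hypothesis 2 of \cref{prop:technical-convergence} fails outright. That hypothesis tolerates only a $c(U-B)\min\{\gamma,\varepsilon\delta\}$ fraction of terms outside $[B,U]$.

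Second, the quantile says nothing about the average. Take a $\delta^{1/2}$ fraction near $U_i$ and all other terms at the midpoint of $[B_i,U_i]$. The quantile is then close to $U_i$, yet $\gamma\approx 1/2$, so your first case does not deliver $\gamma\approx\delta^{1/2}$.

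Third, \cref{lem:reverse-majorization} only allows collapsing a top block of size $m\le\delta_k k$. A block of size $\delta^{1/2}k\gg\delta k$ with unequal values changes $\bar a^{(n)}_{\delta_n n}$. Moreover, averaging the top block produces a sequence that is \emph{majorized by} the original, so \cref{lem:majorization} transfers lower bounds, not the upper bounds you need.

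The missing idea is a second mechanism that yields the $\varepsilon\delta^{1/2}$ rate without any majorization. The $\varepsilon_n\bar a_n$ term pulls each new term inward by $\varepsilon$ times the distance of the average from the endpoint. To use this you must monitor the average over a whole window $[n_T^+,n_{T+1}]$ of controlled length, not at one time. Write $\varepsilon_*=\varepsilon_{n_{T+1}}$ and $\delta_*=\delta_{n_{T+1}}$.

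If at some $m$ in the window $\bar a_m$ is within relative distance $\delta_m^{1/2}$ of $B_T$ (or $U_T$), then \cref{prop:technical-convergence} with $\gamma\le\delta_m^{1/2}$ moves the opposite endpoint inward by $c\varepsilon_m\delta_m/\gamma\ge c\varepsilon_m\delta_m^{1/2}$.

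Otherwise, for every $m$ in the window,
\[
a_{m+1}\le\varepsilon_*\bar a_m+(1-\varepsilon_*)\bar a^{(m)}_{\delta_* m}\le U_T-\varepsilon_*\delta_*^{1/2}(U_T-B_T)+O\Bigl(\frac{n_T}{\delta_* m}\Bigr),
\]
and once $m\ge n_T^+$ the error term is at most half of the push. So every term generated in the window lies in $[B_T^+,U_T^-]$, the interval shrunk by $\frac12\varepsilon_*\delta_*^{1/2}(U_T-B_T)$ on each side. One more application of \cref{prop:technical-convergence}, now to this shrunken interval with $\gamma\ge 2\delta_*^{1/2}$ (second alternative), keeps all later terms inside it.

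The two mechanisms balance exactly at $\gamma=\delta^{1/2}$, which is where $\alpha+\beta/2$ comes from. A single-time check cannot substitute for the window. A middle position of the average at time $k'$ need not persist. Waiting until the average eventually reaches an endpoint gives no control on when the next scale begins, and that control is what the summation in \cref{prop:convergence-contraction} requires.
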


\begin{proof}
Since the recurrence is affine invariant, we may shift and rescale the initialization to assume without loss of generality that $a_1, \ldots, a_k \in [0,1]$.  Furthermore, we may assume that $k$ arbitrarily large.

	We will construct a nested sequence of shrinking intervals $[B_T,U_T]$ that capture the tails of the sequence. The intervals will be defined inductively in stages. More precisely, at each stage $T \in \NN_0$, we will maintain an interval $[B_T,U_T]$, an index  $n_T$, and the guarantee that $a_n \in [B_T, U_T]$ for all $n \geq n_T$.
	
	At stage $T = 0$, we initialize with $B_0 = 0$, $U_0 = 1$, and $n_0 = k$. Since every  term of the sequence is a convex combination of $a_1, \ldots, a_k$, all subsequent terms remain bounded in $[0,1]$. In particular, $a_n \in [0,1]$ for all $n \geq k$, so the base case holds.
	
	Assume now that we are at stage $T$. We define the starting index of the next stage implicitly by setting 
	\begin{equation*}
		n_{T+1} \triangleq  n_T \times \frac{K^2}{\varepsilon_{n_{T+1}}^2 \delta_{n_{T+1}}^3(U_T - B_T)^2}.
	\end{equation*}
	Here $K$ is a sufficiently large constant to be specified below. This definition is well posed as the sequence  $m \varepsilon_m^2 \delta_m^3$ is   increasing for $m \geq k$ when $k$ is sufficiently large.  For notational convenience, we set $\varepsilon_* \triangleq \varepsilon_{n_{T+1}}$ and $\delta_* \triangleq \delta_{n_{T+1}}$.

	Below, we construct the next interval $ [B_{T+1}, U_{T+1}] \subset [B_T, U_T]$ such that $ a_{n} \in [B_{T+1}, U_{T+1}]$ for all $n \geq n_{T+1}$, and 
	\begin{equation*}
		(U_{T+1}- B_{T+1}) \leq (U_T - B_T) \times (1 - c \varepsilon_{*} \delta_{*}^{1/2})
	\end{equation*}
	for a sufficiently small constant $c$, independent of $T$. 
	
	Observe that this suffices to complete the proof, for the following reason. Given the contraction bound above, \cref{prop:convergence-contraction} implies that $(U_T - B_T) \to 0$ as $T \to \infty$. Since the intervals are nested, there exists a limit $L$ such that $B_T \to L$ and $U_T \to L$. Moreover, since $a_n \in [B_T, U_T]$ for all $n \geq n_T$, we conclude that 
	\begin{equation*}
		\liminf_{n \to \infty}a_n  = L = \limsup_{n \to \infty}a_n.
	\end{equation*}
	This implies that the sequence converges.

	\medskip
	It now suffices to construct the interval with the desired properties. 
	For the purposes of the analysis, we define an intermediate index by
	\begin{equation*}
		n_T^+ \triangleq n_T \times \frac{K}{\varepsilon_{*}\delta_{*}^2(U_T-B_T)}.
	\end{equation*}
	To aid the reader’s understanding, let us outline the reasoning behind the argument.
	
	\begin{itemize}
\item[-]
Recall that, by inductive hypothesis, $a_n \in [B_T, U_T]$ for all $n \geq n_T$. This means that if we wait long enough, an overwhelming majority of the sequence will lie in $[B_T, U_T]$. This allows us to effectively zoom in on this interval and apply our technical lemma. More precisely, by time $n_T^+$ the second condition of \cref{prop:technical-convergence} is met.

\item[-]
If, at any time $m$ between $n_T^+$ and $n_{T+1}$, the average $\bar a_m$ comes too close to $B_T$, then by \cref{prop:technical-convergence}, subsequent terms will be bounded away from $U_T$. This lets us shrink the interval. By symmetry, the same argument applies with the roles of $U_T$  and $B_T$ reversed.

\item[-]
Alternatively, if at no time between $n_T^+$ and $n_{T+1}$ does the average come close to $B_T$ or $U_T$, then the sequence remained away from both endpoints for too long. In this case, by \cref{prop:technical-convergence} again, all subsequent terms remain uniformly bounded away from at least one endpoint, which again allows us to shrink the interval.
\end{itemize}
	
	We now carry out the above analysis as follows. First, suppose that there exists an index $m$ with $n_T^+ \leq m \leq n_{T+1}$, such that $\bar{a}_m \leq \delta_{m}^{{1/2}}U_T + (1-\delta_m^{1/2})B_T$. Since all terms $\set{a_j : n_T \leq j \leq m}$ are contained in $[B_T, U_T]$, the fraction of terms lying outside the interval is at most 
	\begin{equation*}
		\frac{n_T}{m} \leq \frac{n_T}{n_T^+} = \frac{\varepsilon_{*} \delta_{*}^2(U_T-B_T)}{K} \leq \frac{\varepsilon_m \delta_m (U_T - B_T)}{K}.
	\end{equation*}
	Therefore, if $K$ is chosen sufficiently large, the conditions \cref{prop:technical-convergence} are met. It follows that for sufficiently small constant $c > 0$ and all $n \geq m$, 
	\begin{align*}
		a_n &\leq U_T - c\,\frac{\varepsilon_m \delta_m}{\delta_m^{1/2}}(U_T - B_T) \leq U_T - c\,\varepsilon_{*}\delta_{*}^{{1/2}}(U_T- B_T).
	\end{align*}
	In this case, we may set $B_{T+1} \triangleq  B_T$ and $U_{T+1} \triangleq  U_T - c\,\varepsilon_{*}\delta_{*}^{1/2}(U_T- B_T)$ to finish the proof.	
	\medskip
	
	Analogously, if  there exists $m\in [n_T^+, n_{T+1}]$ such that $\bar{a}_m \geq \delta_{m}^{{1/2}}B_T + (1-\delta_m^{1/2})U_T$, then we may instead set $U_{T+1} \triangleq U_T$ and $B_{T+1} \triangleq B_T +c\varepsilon_* \delta_*^{1/2} (U_T - B_T)$ to finish the proof.	
	\medskip
	
	In the remainder of the proof, we assume that for all $m \in [n_T^+, n_{T+1}]$,  
	\begin{equation*}
		\delta_{m}^{{1/2}}U_T + (1-\delta_m^{1/2})B_T < \bar{a}_m < \delta_{m}^{{1/2}}B_T + (1-\delta_m^{1/2})U_T.
	\end{equation*}
	Define the intermediate candidate endpoints by 
	\begin{align*}
		U_{T}^- \triangleq U_T - \frac{1}{2}\varepsilon_* \delta_*^{{1/2}}(U_T - B_T) \quad \text{ and } \quad 
		B_{T}^+ \triangleq B_T + \frac{1}{2}\varepsilon_* \delta_*^{{1/2}}(U_T - B_T).
	\end{align*}

	First, we will show that $a_{m+1} \in [B_{T}^+, U_{T}^-]$ for all $m \in [n_T^+, n_{T+1})$. To this end, observe that
		\begin{align*}
		a_{m+1} &\leq \varepsilon_m \bar{a}_m + (1-\varepsilon_m)\bar{a}_{\delta_m m }^{(m)} \\
		&\leq \varepsilon_* \bar a_m + (1-\varepsilon_*)\bar{a}_{\delta_* m }^{(m)}\\
			&\leq \varepsilon_*[\delta_*^{{1/2}}B_T + (1-\delta_*^{1/2})U_T] + (1-\varepsilon_*)\sqb{\frac{1 \times n_T + U_T \times (\delta_* m - n_T)}{\delta_* m}}\\
		&= \varepsilon_*\delta_*^{{1/2}}B_T + (1-\varepsilon_*\delta_*^{{1/2}})U_T + (1-U_T)(1-\varepsilon_*)\frac{n_T}{\delta_* m}.
	\end{align*}
	The second inequality above follows from monotonicity $\delta_* \leq \delta_m$. In the third inequality, we use the crude bounds: $a_n \leq 1$ for $n \leq n_T$ and $a_n \leq U_T$ if $a_n > n_T$. To conclude, we bound the final term as
	\begin{align*}
	(1-U_T)(1-\varepsilon_*)\frac{n_T}{\delta_* m} &\leq \frac{n_T}{\delta_* n_T^+} = \frac{1}{K}\varepsilon_* \delta_* (U_T - B_T) \leq \frac{1}{2}\varepsilon_*\delta_*^{1/2}(U_T - B_T). 
		\end{align*}
	Substituting this bound above yields $a_{m+1} \leq U_{T}^-$ and analogously $a_{m +1} \geq B_T^+$.

	Define $\gamma$ by $\bar a_{n_{T+1}} = \gamma U_T^- + (1-\gamma) B_T^+$. Without loss of generality, let us assume that $\gamma \leq 1/2$. Recall that $\bar a_{n_{T+1}} \geq \delta_*^{1/2}U_T + (1-\delta_*^{1/2})B_T$.	Assuming $\varepsilon_*$ is sufficiently small, this implies  $\gamma \geq 2\delta_*^{1/2}$.
	
	Furthermore, since $a_{m+1} \in [B_T^+, U_T^-]$ for all $m \in [n_T^+, n_{T+1})$, the fraction of terms up to time $n_{T+1}$ that fall outside the interval $[B_{T}^+, U_{T}^-]$ is at most 
	\begin{equation*}
		\frac{n_T^+}{n_{T+1}} = \frac{\varepsilon_*\delta_*(U_T - B_T) }{K} \leq  \frac{2\,\varepsilon_*\delta_*(U_{T}^- - B_{T}^+)}{K}.
	\end{equation*}
	Hence, by  \cref{prop:technical-convergence} we get $a_n \leq U_T^-$ for all $n \geq n_{T+1}$.	In particular, we may set $B_{T+1} = B_T$ and $U_{T+1} = U_T^-$ to conclude the proof.
	 \end{proof}

We now apply \cref{prop:technical-divergence} to prove the non-convergence in \cref{thm:divergence}.
	
\begin{theorem}\label{thm:divergence}
Let $A,B > 0$ and $ \alpha,\beta > 0$ be constants satisfying $\alpha + \beta / 2 > 1$. 
	Suppose $\varepsilon_n = A (\log n)^{-\alpha}$ and $\delta_n = B(\log n)^{-\beta}$ . There exists a sequence $\set{a_n}_{n=1}^\infty$  and an initialization $(a_1, \ldots, a_k)$ satisfying
	\begin{equation*}
		a_{n+1} \in \varepsilon_n \bar{a}_n + (1-\varepsilon_n)[\bar a_{\delta_n n}^{[n]}, \bar a_{\delta_n n}^{(n)}] \text{ for all } n \geq k
	\end{equation*} 
	that does not converge.
\end{theorem}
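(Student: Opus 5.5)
We build the non-convergent sequence adversarially, in stages, by alternately pushing the sequence up and down. At each stage we use only the two extremal choices of \cref{eq:new-formulation}: $a_{n+1} = \varepsilon_n\bar a_n + (1-\varepsilon_n)\bar a^{(n)}_{\delta_n n}$ (``push up'') or $a_{n+1} = \varepsilon_n\bar a_n + (1-\varepsilon_n)\bar a^{[n]}_{\delta_n n}$ (``push down''). By the symmetry $a \mapsto 1-a$, the push-down phase is the mirror image of the push-up phase, so \cref{prop:technical-divergence} applies to both.

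\textbf{Stage structure.} Start at stage $0$ with index $k_0 = k$ large. Take an initialization in $[0,1]$ in which a $\delta_{k}^{1/2}$ fraction equals $U_0=1$ and a $\delta_k^{1/2}$ fraction equals $0$. Run push-up from $k_T$ to $k_T' = k_T/(2\delta_{k_T}^{1/2})$. By \cref{prop:technical-divergence}, at least half of $a_1,\dots,a_{k_T'}$ are then $\ge (1-C\varepsilon_{k_T}\delta_{k_T}^{1/2})U_T$. Half is at least a $\delta_{k_T'}^{1/2}$ fraction, so we can then run push-down on the interval $[k_T',k_{T+1}]$ with $k_{T+1}=k_T'/(2\delta_{k_T'}^{1/2})$. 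The push-down conclusion needs the low terms, so we must also ensure that at least a $\delta^{1/2}$ fraction of the first $k_T'$ terms sit near the current lower level $L_T$.

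The point is that each push-up phase only lengthens the sequence by a factor $1/(2\delta^{1/2}) = O((\log k)^{\beta/2})$. The low terms created earlier therefore still occupy a fraction of order $1/2 \cdot 2\delta^{1/2}\gtrsim\delta^{1/2}$, up to constants. These constant losses are exactly what we need to control. To keep the bookkeeping clean, I would run each phase only until half of the currently available terms are pushed. Equivalently, I would apply \cref{prop:technical-divergence} in its mirrored form with $B$-relative levels, using affine invariance.

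\textbf{Tracking the oscillation.} Let $U_T$ and $L_T$ be the guaranteed upper and lower levels after stage $T$, and set $w_T = U_T - L_T$. Applying \cref{prop:technical-divergence} affinely relative to $L_T$ gives
\[
w_{T+1} \ge w_T\,(1 - 2C\varepsilon_{k_T}\delta_{k_T}^{1/2}).
\]
Since $\log k_{T+1} = \log k_T + O(\log\log k_T)$, we have $\log k_T \asymp T\log\log k_0$ roughly, so along stages $\log k_T$ grows linearly (up to $\log\log$ factors). Then $\varepsilon_{k_T}\delta_{k_T}^{1/2}\asymp (\log k_T)^{-(\alpha+\beta/2)}$, which is summable over $T$ exactly when $\alpha+\beta/2>1$. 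This is the mirror of \cref{prop:convergence-contraction}. Choosing $k_0$ large makes $\sum_T 2C\varepsilon_{k_T}\delta_{k_T}^{1/2} < 1/2$, so $w_T \ge w_0\prod_T(1-\cdots) \ge c>0$.

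At the end of stage $T$, a $\ge 1/2$ fraction of terms near the end of each phase lie above $U_T - o(1)$ or below $L_T + o(1)$. Hence $\limsup a_n - \liminf a_n \ge c$, and the sequence does not converge.

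\textbf{Main obstacle.} The main obstacle is the hypothesis bookkeeping between phases. \cref{prop:technical-divergence} assumes exact recursion with the top-average choice and asserts only ``at least half'' above the level. When we switch to push-down, we must show that a $\delta^{1/2}$ fraction of all terms so far still lies below the lower level. Pushed terms from earlier stages lie at slightly degraded levels, so the level sequence must be defined as worst-case thresholds.

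I would handle this by maintaining the invariant that at the start of each phase, at least a $\delta_{k}^{1/2}$ fraction of terms are $\ge U_T$ and at least that fraction are $\le L_T$. By majorization (\cref{lem:majorization}), the lower terms can only help the opposite phase, so discarding everything else to the value $L_T$ (respectively $U_T$) is a valid worst-case comparison. The factor $2$ loss in fractions per stage is absorbed by starting each phase at an index where the fraction is $2\delta^{1/2}$ rather than $\delta^{1/2}$. This changes only the constants in the $\log k_T$ growth.
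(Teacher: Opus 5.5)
Your architecture is the paper's: alternating extremal push-up and push-down phases, each lengthening the sequence by a factor $1/(2\delta^{1/2})$, \cref{prop:technical-divergence} and its mirror image, and summability of $\varepsilon_{k_T}\delta_{k_T}^{1/2}$. However, the bookkeeping you single out as the main obstacle is resolved incorrectly. The invariant you propose is that at the start of each phase at least a $\delta^{1/2}$ fraction of terms lie above $U_T$ and at least a $\delta^{1/2}$ fraction lie below $L_T$. This does not propagate. A push-up phase multiplies the index by $1/(2\delta^{1/2})$, so a $\delta^{1/2}$ fraction of low terms shrinks to a $2\delta$ fraction, far below the $\delta^{1/2}$ fraction the next push-down phase needs. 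The loss is a factor $2\delta^{1/2}$, not $2$, so starting from a $2\delta^{1/2}$ fraction only yields $4\delta$. Your initialization fails for the same reason: a $\delta_k^{1/2}$ fraction of zeros becomes a $2\delta_k$ fraction before the first push-down, unless the remaining initial terms are also low. What closes is the asymmetric invariant the paper uses, which your remark about the fraction $\tfrac12\cdot 2\delta^{1/2}$ already hints at. At the start of a push-up phase, at least a $\delta^{1/2}$ fraction lies above $U$ and at least half lies below $L$; that half is supplied by the ``at least half'' conclusion of the preceding push-down phase. At the end of the phase, at least half lies above the new $U$, and at least $\tfrac12\cdot 2\delta^{1/2}=\delta^{1/2}$ still lies below $L$, which is exactly the mirrored invariant. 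Correspondingly, you should initialize with half zeros and half ones.

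Separately, you cannot apply \cref{prop:technical-divergence} ``affinely relative to $L_T$''. It requires the initialization to lie in $[0,1]$, i.e.\ nothing below the base level. By construction, a positive fraction (in fact at least half, under the correct invariant) of the terms lies below $L_T$. The majorization step in its proof sends every term not above $U$ to the base level, and the resulting comparison sequence then need not be majorized by yours. Instead, apply it relative to the absolute floor $0$, and the mirror relative to the ceiling $1$, as the paper does. A push-up phase starting at index $k$ sets $U\mapsto(1-C\varepsilon_k\delta_k^{1/2})U$, and a push-down phase sets $1-L\mapsto(1-C\varepsilon_k\delta_k^{1/2})(1-L)$. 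Each phase at least doubles the index, so $\log k_T\ge \log k_0+T\log 2$. Hence for $k_0$ large, $\prod_T(1-C\varepsilon_{k_T}\delta_{k_T}^{1/2})>1/2$, which gives $L_\infty<1/2<U_\infty$ directly, with no need to track the width $w_T$.
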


\begin{proof}
	We will construct the sequence together with a nested chain of intervals $[B_T, U_T]$ such that $\bigcap_{T=0}^\infty [B_T, U_T] = [B_\infty, U_\infty]$ for some $B_\infty < 1/2 < U_\infty$. Furthermore, there will be infinitely many terms of the sequence $\set{a_n}_{n = 1}^\infty$ lying above $U_\infty$ and below $B_\infty$. 
	
	The sequence and the intervals will be constructed inductively in stages. More precisely, at each stage $T \in \NN_0$, we will maintain an interval $[B_T,U_T]$, an index  $n_T$, and the following invariant 
	\begin{itemize}
	\item if $T$ is even, then 
		\begin{itemize}
		\item at least $\delta_{n_T}^{1/2}$ fraction of terms among $\set{a_n}_{n = 1}^{n_T}$ are above $U_T$; 
		\item at least $\frac{1}{2}$ fraction of terms among $\set{a_n}_{n = 1}^{n_T}$ are below $B_T$. 
		\end{itemize}
	\item if $T$ is odd, then 
		\begin{itemize}
		\item at least $\frac{1}{2}$ fraction of terms among $\set{a_n}_{n = 1}^{n_T}$ are above $U_T$;
		\item at least $\delta_{n_T}^{1/2}$ fraction of terms among $\set{a_n}_{n = 1}^{n_T}$ are below $B_T$. 
		\end{itemize}
	\end{itemize}
	At stage $T = 0$, we initialize with $B_0 = 0$, $U_0=1$, $n_0 = k$, and define the sequence
	\begin{equation*}
		a_1 = \cdots = a_{\floor{k/2}} = 0 \text{ and } a_{\ceil{k/2}} = \cdots = a_k = 1.
	\end{equation*}
	We assume that $k$ is sufficiently large. Suppose now that we are at stage $T$, having defined all terms of the sequence up to index $n_T$. Without loss of generality, assume $T$ is even. For  brevity, set $\varepsilon_* \triangleq \varepsilon_{n_T}$ and $\delta_* \triangleq \delta_{n_T}$.  Define
	\begin{equation*}
		n_{T+1} \triangleq \,n_T /{2\delta_*^{1/2}} \text{ and } a_{n+1} \triangleq  \varepsilon_{*} \bar{a}_n + (1-\varepsilon_*)\bar{a}_{\delta_{*} n}^{(n)} \text{ for } n_T\leq n < n_{T+1}.
	\end{equation*}
	Since $\varepsilon_n \leq \varepsilon_{*}$ and $\delta_n \leq \delta_*$, this defines a valid extension of the sequence.
	\medskip
	
	To define the new endpoints, observe that the fraction of terms among $\set{a_n}_{n=1}^{n_T}$ that are less than $B_T$ is at least $\frac{n_T/ 2}{n_{T+1}}= \delta_*^{1/2}$. We may therefore set $B_{T+1} \triangleq B_T$. Furthermore, by \cref{prop:technical-divergence}, at least half of the terms among $\set{a_n}_{n=1}^{n_{T+1}}$ exceed $(1-C\varepsilon_{n_T}\delta_{n_T}^{1/2})U_T \triangleq U_{T+1}$ for a sufficiently large constant $C$. This concludes the construction. 
	
	Observe that, by construction, there are  infinitely many terms of the sequence $\set{a_n}_{n = 1}^\infty$ lying above $U_\infty$ and below $B_\infty$. It therefore suffices to show $U_\infty > B_\infty$. This follows directly from \cref{prop:divergence-contraction} since  $\alpha + \beta / 2 > 1$ and	 \begin{equation*}
	 	B_\infty \triangleq \lim_{T\to \infty} B_T  < \frac{1}{2} < \lim_{T \to \infty} U_T  \triangleq U_\infty.
	 \end{equation*}
	 This concludes the proof.
	 \end{proof}

\section{Fixed Shape Analysis}\label{sec:fixed-shape}

The main results of this paper focus on worst-case (possibly adversarial) non-stationary weights that are constrained between two envelopes. In contrast, we may
 encounter weights that are obtained by discretizing a limiting
``shape'' on $(0,1)$.

This setting is largely orthogonal to our envelope framework: discretized shapes may assign polynomially large weight to the very recent past, which is far beyond the polylogarithmic ceilings in \cref{thm:main}. Yet, the additional structure allows a different proof
strategy based on renewal theory.

\subsection{A Renewal Lemma for Multiplicative Recursions}

In this subsection, we develop a general and useful lemma that will be used to obtain the main result of the section.

\begin{definition}
A real-valued random variable $Y$ is \emph{lattice} if there exist constants $a\in\RR$ and
$d>0$ such that $\PP(Y\in a+d\ZZ)=1$. Otherwise $Y$ is \emph{non-lattice}.
\end{definition}

\begin{definition}[DRI]
A real-valued function $\eta : \RR_+ \to \RR$ is said to be \emph{directly Riemann integrable} if
\begin{equation*}
    \lim_{h \downarrow 0} L(h) = \lim_{h \downarrow 0} U(h),
\end{equation*}
where $L$ and $U$ are the lower and upper mesh sums, defined by
\begin{align*}
    L(h) &= h \sum_{k = 0}^\infty \inf\set{\eta(t) : t \in (kh,(k+1)h]} \\
    U(h) &= h \sum_{k = 0}^\infty \sup\set{\eta(t) : t \in (kh,(k+1)h]}.
\end{align*}
\end{definition}

\begin{lemma}[Renewal lemma with integrable error]\label{lem:renewal-with-error}
Let $T$ be a random variable taking values in $(0,1)$, and let
$F:\RR_{+}\to\RR$ be a bounded piecewise continuous function satisfying
\begin{equation}\label{eq:mult-renewal}
\qquad
F(x) = \EE\bigl[F(Tx)\bigr] + \epsilon(x)
\quad \text{for all } x \ge 1.
\end{equation}
Assume that $Y\triangleq \log(1/T)$ is non-lattice with finite mean $\mu\triangleq \EE[Y] < \infty$,
and that $|\epsilon(x)|/x$ is directly Riemann integrable over $(1,\infty)$.
Then, $F(x)$ converges and
\begin{equation}\label{eq:mult-renewal-limit}
\lim_{x\to\infty} F(x)
= \EE\bigl[F(\widetilde T)\bigr]
+ \frac{1}{\mu}\int_1^\infty \frac{\epsilon(x)}{x}\,dx,
\end{equation}
where $\widetilde T$ is a random variable on $(0,1)$ with density $\widetilde p(t)= {\PP(T<t)}/{(\mu t)}$.
\end{lemma}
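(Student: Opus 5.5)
Substitute $x=e^s$ and set $G(s)\triangleq F(e^s)$, $\eta(s)\triangleq \epsilon(e^s)$ for $s\ge 0$. With $Y=\log(1/T)$, the relation \eqref{eq:mult-renewal} becomes
\[
G(s)=\EE[G(s-Y)]+\eta(s),\qquad s\ge 0.
\]
Here $G(s-Y)$ may have a negative argument. On that event $e^{s-Y}\in(0,1)$, so the value is an initial datum of $F$ on $(0,1)$, which the equation does not constrain. We therefore split the expectation:
\[
G(s)=\EE\bigl[G(s-Y)\mathbf 1_{\{Y\le s\}}\bigr]+z(s),\qquad z(s)\triangleq \eta(s)+\EE\bigl[F(e^{s-Y})\mathbf 1_{\{Y>s\}}\bigr].
\]
This is a standard renewal equation $G=z+G*\mu_Y$ on $[0,\infty)$, where $\mu_Y$ is the law of $Y$.

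Since $F$ is bounded, $G$ is bounded, hence locally bounded. The renewal equation then has the unique locally bounded solution $G=z*U$, with $U=\sum_{n\ge0}\mu_Y^{*n}$ the renewal measure. To prove this, iterate the equation $N$ times and show that the remainder $\EE[G(s-S_N)\mathbf 1_{\{S_N\le s\}}]$ tends to $0$. This holds because $\PP(S_N\le s)\to 0$ (as $\mu>0$) and $G$ is bounded.

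Apply the key renewal theorem: $Y$ is non-lattice with finite mean $\mu$, so if $z$ is directly Riemann integrable then
\[
\lim_{s\to\infty}G(s)=\frac1\mu\int_0^\infty z(s)\,ds .
\]
The main work is verifying that $z$ is DRI. The change of variables $x=e^s$ gives $\int_0^\infty|\eta(s)|\,ds=\int_1^\infty|\epsilon(x)|/x\,dx$. I would take the hypothesis on $\epsilon$ to mean that $\eta$ is DRI in $s$, and say so explicitly; this is the step where the paper's phrasing is loosest. For the second piece $z_2(s)=\EE[F(e^{s-Y})\mathbf 1_{\{Y>s\}}]$:
\begin{itemize}
\item it is bounded by $\norm{F}_\infty\PP(Y>s)$, a nonincreasing integrable function since $\EE Y<\infty$;
\item it is a.e.\ continuous, using piecewise continuity of $F$.
\end{itemize}
The standard criterion (a.e.\ continuous and dominated by a monotone integrable function) then makes $z_2$ DRI. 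Continuity is the delicate point: atoms of $Y$ combined with jumps of $F$ could create discontinuities, but at most countably many. If needed, I would instead use the sufficient condition "$z_2$ is dominated by a DRI function and Riemann integrable on compacts", which holds because $z_2$ is bounded with countably many discontinuities.

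Finally, evaluate the integral. Fubini gives
\[
\int_0^\infty z_2(s)\,ds=\EE\int_0^{Y}F(e^{s-Y})\,ds=\EE\int_{T}^{1}\frac{F(t)}{t}\,dt=\int_0^1 \frac{F(t)\PP(T<t)}{t}\,dt,
\]
using the substitution $t=e^{s-Y}$ and $e^{-Y}=T$. Dividing by $\mu$ yields $\EE[F(\widetilde T)]$ with $\widetilde p(t)=\PP(T<t)/(\mu t)$; this is a probability density because $\int_0^1\PP(T<t)/t\,dt=\EE\log(1/T)=\mu$. Combining this with $\frac1\mu\int_0^\infty\eta(s)\,ds=\frac1\mu\int_1^\infty\epsilon(x)/x\,dx$ gives \eqref{eq:mult-renewal-limit}, and convergence of $F(x)$ follows from that of $G(s)$.
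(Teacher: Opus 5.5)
Your argument is correct, but it is organized differently from the paper's.

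The paper keeps $G(s)=\EE[G(s-Y)]+\eta(s)$ with the negative arguments left inside $G$. It stops the walk at $\tau_s=\inf\{m\ge 1: S_m\ge s\}$ and uses optional stopping to get $G(s)=\EE[G(-R_s)]+\EE\bigl[\sum_{n<\tau_s}\eta(s-S_n)\bigr]$. It then needs two limit theorems:
\begin{itemize}
\item the excess-life theorem, giving $R_s\Rightarrow R$ with density $\PP(Y>r)/\mu$, so the first term tends to $\EE[F(e^{-R})]$;
\item the key renewal theorem, applied to $\eta$ alone for the second term.
\end{itemize}

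You instead absorb the boundary contribution $\EE[F(e^{s-Y})\mathbf{1}_{\{Y>s\}}]$ into the forcing term. Your iteration argument then replaces the optional-stopping step, and a single application of the key renewal theorem does all the work. The constant $\EE[F(\widetilde T)]$ falls out of a Fubini computation, which also checks directly that $\widetilde p$ is a probability density.

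The cost of your route is verifying that $z_2$ is DRI, and you do this correctly. It is dominated by the monotone integrable function $\norm{F}_\infty\PP(Y>s)$. By bounded convergence it is continuous off a countable set: the atoms of $Y$, together with the points $a+\log d$, where $a$ is an atom of $Y$ and $d\in(0,1)$ is a jump of $F$.

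The paper sidesteps this check, but it tacitly needs an analogous fact to pass to the limit in $\EE[G(-R_s)]$. Namely, the discontinuities of $G$ on $(-\infty,0]$ must be null for the law of $R$, which holds because $R$ has a density. In return, the paper's route gives $\widetilde T$ its natural interpretation as the exponentiated stationary overshoot.

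Your reading of the hypothesis as ``$\eta$ is DRI in $s$'' is exactly what the paper's proof uses without comment. It is harmless in the fixed-shape application, where $|\eta(s)|\lesssim e^{-\kappa s}$.
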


\begin{proof}
Define $G,\eta:\RR\to\RR$ by the logarithmic change of variables
\begin{equation*}
\qquad
G(s)\triangleq F(e^{s})
\quad\text{and}\quad
\eta(s)\triangleq \epsilon(e^{s})
\quad\text{for all } s\in\RR.
\end{equation*}
Then  (by the substitution $x=e^{s}$)  \eqref{eq:mult-renewal} becomes
\begin{equation*}
\qquad
G(s)=\EE\bigl[G(s-Y)\bigr]+\eta(s)
\quad \text{for all } s\ge 0,
\end{equation*}
where $Y=\log(1/T)$ is non-lattice with mean $\mu$.
Moreover, $G$ is bounded and piecewise continuous, and $\eta$ is DRI. Let $\{Y_j\}_{j\ge 1}$ be i.i.d. copies of $Y$, and set $S_0=0$ and
$S_m\triangleq \sum_{i=1}^m Y_i$.
A straightforward induction on $m$ yields
\begin{equation*}
\qquad
G(s)=\EE\bigl[G(s-S_m)\bigr]
+\EE\Bigl[\sum_{n=0}^{m-1}\eta(s-S_n)\Bigr]
\quad \text{for all } m\ge 1.
\end{equation*}
For each $s\ge 0$, define the first passage time
$\tau_s\triangleq \inf\{m\ge 1: S_m\ge s\}$ and the overshoot
$R_s\triangleq S_{\tau_s}-s\in[0,\infty)$. We will now show that 
\begin{equation}\label{eq:renewal-ost}
\qquad
G(s)=\EE\bigl[G(-R_s)\bigr]
+\EE\Bigl[\sum_{n=0}^{\tau_s-1}\eta(s-S_n)\Bigr].
\end{equation}
To this end, let $(\calF_{m})_{m  = 0}^\infty$ be the natural filtration of the process $(Y)_{m = 0}^\infty$.  Fix $s \geq 0$ and define $M_0 = 0$ and 
\begin{equation*}
    M_m = G(s - S_{m \wedge \tau_s}) + \sum_{n = 0}^{m \wedge \tau_s - 1}\eta(s- S_n).
\end{equation*}
Observe that since $\eta(s) = G(s) - \EE\sqb{G(s-Y)}$ and $G$ is bounded, $\norm{\eta}_\infty \leq 2\norm{G}_\infty$. This immediately implies the integrability of $M_m$. Furthermore, it is easy to check that $\EE[M_{m+1} | \calF_m] = M_m$, so $(M_m)_{m = 0}^\infty$ is a martingale. Since $\tau_s \wedge m$ is a bounded stopping time, the optional stopping theorem applies so 
\begin{equation*}
    G(s) = \EE[G(s - S_{\tau_s \wedge m})] + \EE\Bigl[\sum_{n=0}^{m \wedge \tau_{s} - 1}\eta(s-S_n)\Bigr].
\end{equation*}
Note that since $Y > 0$ we have $\EE[\tau_s] < \infty$. Since $G$ and $\eta$ are bounded, 
\begin{equation*}
    |G(s-S_{\tau_s \wedge m})| \leq \norm{G}_\infty \quad \text{ and } \quad \sum_{n=0}^{m \wedge \tau_{s} - 1}\eta(s-S_n) \leq \norm{\eta}_\infty \tau_s.
\end{equation*}
Hence, by dominated convergence theorem and the fact that $\tau_s \wedge m \to \tau_s$ almost surely, we deduce \eqref{eq:renewal-ost}. We now proceed to analyze equation \eqref{eq:renewal-ost}.

\smallskip
\noindent\emph{Overshoot Term.}
Since $Y$ is non-lattice with finite mean $\mu$, the excess life convergence theorem (section 10.3 of \cite{grimmett2020probability})  implies that $R_s$ converges in distribution to a random variable $R$ with density $r\mapsto \PP(Y>r)/\mu$ on $[0,\infty)$; Hence, 
\begin{equation*}
\lim_{s\to\infty}\EE\bigl[G(-R_s)\bigr]=\EE\bigl[G(-R)\bigr].
\end{equation*}
If we define $\widetilde T\triangleq e^{-R}$, then $G(-R)=F(\widetilde T)$.
A change of variables shows that $\widetilde T$ has density
$\widetilde p(t)=\PP(T<t)/(\mu t)$ on $(0,1)$, so $\EE[G(-R)]=\EE[F(\widetilde T)]$.

\smallskip
\noindent\emph{Error term.}
Let $\sigma$ be the renewal measure
\begin{equation*}
\sigma([a,b])\triangleq \sum_{n\ge 0}\PP(S_n\in [a,b]) \quad \text{ for all } a\leq b.
\end{equation*}
Then, since $\eta$ is integrable,
\begin{align*}
\EE\Bigl[\sum_{n=0}^{\tau_s-1}\eta(s-S_n)\Bigr]
&=\EE\Bigl[\sum_{n\ge 0}\eta(s-S_n)\,\II\{S_n<s\}\Bigr]
=\int_{[0,s)} \eta(s-u)\,\sigma(du).
\end{align*}
Applying the key renewal theorem (Theorem 35 in \cite{serfozo2009basics} or section 10.2 of \cite{grimmett2020probability}) yields 
\begin{equation*}
\lim_{s\to\infty}\int_{[0,s)} \eta(s-u)\,\sigma(du)
=\frac{1}{\mu}\int_0^\infty \eta(u)\,du
=\frac{1}{\mu}\int_1^\infty \frac{\epsilon(x)}{x}\,dx,
\end{equation*}
where the last equality uses the substitution $x=e^{u}$. Finally, taking the limit of \eqref{eq:renewal-ost} gives \eqref{eq:mult-renewal-limit}.
\end{proof}

\begin{rmk}
The non-lattice condition and the finiteness of $\EE[\log(1/T)]$ are both necessary
for convergence in \cref{lem:renewal-with-error}, even if $\epsilon\equiv 0$.
\end{rmk}

\subsection{Convergence for Fixed Shapes}

In this subsection we formalize what it means for the sequence $\{p_n\}_{n\ge 1}$ to have a "fixed shape", and we prove a sufficient condition for convergence in this regime. 

\begin{definition}\label{def:discretize}
Let $p:(0,1)\to[0,\infty)$ be a continuous probability density and $\set{p_n}_{n  = 1}^\infty$ be a sequence of probability mass functions with $p_n$ supported on $[n]$. We say that $\{p_n\}_{n=1}^\infty$ \emph{strongly discretizes} $p$ if there exist constants $\kappa > 0$ and $C > 0$ such that
\begin{equation*}
    \norm{p_x - p}_1 \leq C x^{-\kappa} \text{ for all } x \geq C,
\end{equation*}
where $ p_x$ is a probability density on $(0,1)$ defined by 
\begin{equation*}
    p_x(t) \triangleq x p_{\floor x}( \ceil {tx}) \text{ for all } t \in (0,1). 
\end{equation*}
\end{definition}

The next theorem shows that the fixed shape structure, together with the quantitative discretization assumption above, is sufficient to guarantee convergence of the lookback averages.

\begin{theorem}[Convergence for Fixed Shapes]\label{thm:discrete-fixed-shape}
Let $p:(0,1)\to[0,\infty)$ be a continuous probability density with finite log-moment $\int_0^1 p(t) \log (1/t)  \, dt$. Suppose that $\{p_n\}_{n=1}^\infty$ strongly discretizes $p$, and that   $\{a_n\}_{n=1}^\infty\subseteq \RR^d$ satisfies
\begin{equation*}
\qquad
 a_{n+1}=\EE_{j\sim p_n}\!\bigl[a_j\bigr] \text{ for all } n \geq k .
\end{equation*}
Then the sequence $\{a_n\}_{n=1}^\infty$ converges.
\end{theorem}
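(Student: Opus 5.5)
The plan is to reduce \cref{thm:discrete-fixed-shape} to \cref{lem:renewal-with-error}. We work coordinatewise, so we may assume $d=1$. Since every term is a convex combination of $a_1,\dots,a_k$, the sequence is bounded; write $M\triangleq\max_{j\le k}\abs{a_j}$, so $\abs{a_n}\le M$ for all $n$.

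Define the step function $F(x)\triangleq a_{\floor{x}+1}$ for $x\ge 1$, extended to $(0,1)$ by $F(x)\triangleq a_1$. It is bounded and piecewise continuous. Let $T$ have density $p$. For $x\ge k$ with $n=\floor x$ we have
\begin{equation*}
F(x)=a_{n+1}=\sum_{j=1}^n p_n(j)a_j=\int_0^1 p_n(\ceil{tn})\,n\,a_{\ceil{tn}}\,dt=\int_0^1 p_{n}(t)\,a_{\ceil{tn}}\,dt .
\end{equation*}
Here $p_n$ on the right is the density of \cref{def:discretize} evaluated at $x=n$.

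Next compare $a_{\ceil{tn}}$ with $F(tx)=a_{\floor{tx}+1}$. These indices coincide except when $tn$ or $tx$ crosses an integer in a way that makes them differ, and this happens only on a set of $t$ of Lebesgue measure $O(1/n)$ per integer crossing. Since the two indices differ by at most one, I would avoid the issue by choosing $F$ to be piecewise linear interpolation, or simply redefine $F(x)\triangleq a_{\ceil{x}}$ and work with integer $x$ first. The cleaner route is to take $F(x)\triangleq\int_0^1 p_{\floor x}(t)a_{\ceil{t\floor x}}dt$ directly and treat the mismatch as error.

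The resulting error is
\begin{equation*}
\epsilon(x)=\int_0^1 \bigl(p_{\floor x}(t)-p(t)\bigr)a_{\ceil{t\floor x}}\,dt+\int_0^1 p(t)\bigl(a_{\ceil{t\floor x}}-F(tx)\bigr)\,dt .
\end{equation*}
The first term is at most $M\norm{p_{\floor x}-p}_1\le CM x^{-\kappa}$ by strong discretization. The second term is nonzero only for $t$ in a union of intervals where the rounding differs. With $F(x)=a_{\floor x+1}$ and $x$ replaced by its integer part this set has measure $O(1/x)$ near each integer multiple, but the index mismatch is only by one. Bounding this term only by $2M$ times its measure would give a non-integrable $O(1)$ error. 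To fix this, I would define $F$ so that the relation is exact on integers, namely $F(x)\triangleq a_{\ceil x}$, and use $\ceil{t n}$ with $x=n$ integer. Then $F(tn)=a_{\ceil{tn}}$ exactly and the second term vanishes at integer $x$. For non-integer $x\in(n,n+1)$, I would set $\epsilon(x)\triangleq F(x)-\EE[F(Tx)]$ and bound it by $\abs{\EE F(Tx)-\EE F(Tn)}$ plus the integer error. The quantity $\EE F(Tx)-\EE F(Tn)$ is at most $2M\,\PP(\ceil{Tx}\ne\ceil{Tn})\le 2M\sum_{j\le n+1}\PP(T\in[j/x,j/n])$. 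This is where the hardest estimate sits, and I would control it using continuity of $p$ together with $1/x$-smallness. Here is where I expect the main obstacle: a clean $O(x^{-\kappa'})$ bound needs some regularity of $p$ near $0$ and $1$. I would try splitting $(0,\eta)\cup(\eta,1-\eta)\cup(1-\eta,1)$, with $\eta=x^{-\kappa''}$. On the middle piece $p$ is bounded, giving $O(\norm{p}_{\infty,[\eta,1-\eta]}/\eta\cdot 1/x)$. The end pieces contribute $\PP(T<\eta)+\PP(T>1-\eta)$. If polynomial decay there is unavailable, I would instead use the discretization to move the comparison onto $p_n$ versus $p_{n+1}$, which are both $x^{-\kappa}$-close to $p$.

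Once we have $\abs{\epsilon(x)}\le C' x^{-\kappa'}$ (possibly after a monotone integrable majorant) we can apply the renewal lemma. The function $\abs{\epsilon(x)}/x$ is then dominated by a decreasing integrable function, hence is DRI. Because $T$ has a density, $Y=\log(1/T)$ is absolutely continuous, so it is non-lattice with finite mean equal to the log-moment. Applying \cref{lem:renewal-with-error} shows that $F(x)$ converges, so $a_n=F(n)$ converges.
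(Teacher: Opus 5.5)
Your overall plan (reduce to \cref{lem:renewal-with-error}, obtain $\abs{\epsilon(x)}\lesssim x^{-\kappa}$, get direct Riemann integrability from a decreasing integrable majorant, and note that $Y=\log(1/T)$ has a density, hence is non-lattice with mean equal to the log-moment) is the paper's. However, the step that produces the error bound at non-integer $x$ has a genuine gap.

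Write $q_n(t)\triangleq n\,p_n(\ceil{tn})$ for the density of \cref{def:discretize} at $x=n$. For $x\in(n,n+1)$, the event $\set{\ceil{Tx}\neq\ceil{Tn}}$ is $\set{T\in\bigcup_{j\le n}(j/x,\,j/n]}$. This is a disjoint union of total Lebesgue measure $\frac{(n+1)(x-n)}{2x}\approx\frac{x-n}{2}$. Each crossing costs only $O(1/n)$, but there are about $n$ of them, and near a point $t$ they occupy roughly a fraction $t(x-n)$ of the length. So $2M\,\PP(\ceil{Tx}\neq\ceil{Tn})$ is of order one whenever $x-n$ is. Removing $(0,\eta)$ and $(1-\eta,1)$ changes nothing, and the claimed middle-piece bound $O(\norm{p}_{\infty,[\eta,1-\eta]}\,\eta^{-1}x^{-1})$ is false.

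This is exactly the $O(1)$ obstruction you noticed a few lines earlier. Switching from $a_{\floor{x}+1}$ to $a_{\ceil{x}}$ does not change the quantity $\int_0^1 p(t)\bigl(F(tx)-F(t\floor{x})\bigr)\,dt$. The relation is not exact at integers either, since $F(n)=a_n$ while $\int_0^1 q_n(t)F(tn)\,dt=a_{n+1}$. Comparing $q_n$ with $q_{n+1}$ does not help, because the problem is evaluating $F$ at two different dilations, not comparing two densities. With an optimal coupling instead of the identity coupling you would get $\abs{\EE F(Tx)-\EE F(Tn)}\le M\norm{D_{x/n}p-p}_1$, where $D_\lambda g(t)\triangleq\lambda g(\lambda t)$. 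But continuity of $p$ alone gives no rate for this dilation modulus.

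The missing idea is that \cref{def:discretize} is imposed for all real $x\ge C$. The density $p_x(t)=x\,p_{\floor{x}}(\ceil{tx})$ is precisely the rescaling that removes the mismatch, so you never need to compare $Tx$ with $T\floor{x}$. Take $F(x)\triangleq a_{\ceil{x}}$ and $x\in(n,n+1)$ with $n\ge k$. The substitution $u=tx$ together with $p_n(n+1)=0$ gives
\[
\int_0^1 p_x(t)F(tx)\,dt=\sum_{j=1}^{n}p_n(j)\int_{j-1}^{j}F(u)\,du=\sum_{j=1}^{n}p_n(j)\,a_j=a_{n+1}=F(x).
\]
Hence $\epsilon(x)=\int_0^1\bigl(p_x(t)-p(t)\bigr)F(tx)\,dt$ and $\abs{\epsilon(x)}\le C\norm{F}_\infty x^{-\kappa}$ at once. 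At $x=n+1$ the same bound follows by letting $x$ increase to $n+1$, since $F$ is constant on $(n,n+1]$ and $x\mapsto\EE F(Tx)$ is continuous. This is the paper's proof, and your closing argument then applies verbatim.

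Your total-variation route can also be repaired, but only through the same real-$x$ hypothesis. We have $p_x=D_{x/n}q_n$, and $D_\lambda$ preserves $L^1$ norms of functions supported in $(0,1)$ when $\lambda\ge 1$. Therefore $\norm{D_{x/n}p-p}_1\le\norm{p-q_n}_1+\norm{p_x-p}_1\le 2Cn^{-\kappa}$.
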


\begin{proof}
It suffices to prove convergence in the case $d=1$.
The idea is to embed the discrete recursion into a continuous equation and apply
\cref{lem:renewal-with-error}.  Define a function $F:\RR_+\to\RR$ by $F(x)\triangleq a_{\lceil x\rceil}$. It suffices to show that $F(x)$ converges as $x \to \infty$.  Define $\set{p_x}_{x > k}$ as in \cref{def:discretize}, and note that for all $n \geq k$ and $x \in (n, n+1]$, 
\begin{align*}
\int_0^1 p_x(t)F(tx)\,dt &= \int_0^1 x \sum_{j = 1}^n \II\set{tx \in (j-1,j]} p_n(j) F(tx) \,dt \\
&= \int_0^x \sum_{j=1}^n \II\{u\in(j-1,j]\}\,p_n(j)\,F(u)\,du \\
&=\sum_{j=1}^n p_n(j)a_j= a_{n+1}=F(x).
\end{align*}
Consequently, 
\begin{equation*}
F(x)
=\int_0^1 p(t)F(tx)\,dt
+\int_0^1\bigl[p_x(t)-p(t)\bigr]F(tx)\,dt
= \EE\bigl[F(Tx)\bigr]+\epsilon(x),
\end{equation*}
where $T\sim p$ and $\epsilon(x) \triangleq \int_0^1[p_x(t) - p(t)]F(tx) \,dt$.

To show that $F$ converges, our goal becomes to verify the conditions of \cref{lem:renewal-with-error}. Note that since $\set{a_n}_{n=0}^\infty$ is bounded, $F$ is a bounded piecewise continuous function. Moreover, since $p$ has a continuous density and finite log-moment, $\log(1/T)$ is non-lattice and has finite mean. Hence, it suffices to prove that  $\epsilon(x)/x$ is directly Riemann integrable over $(1,\infty)$.

By a standard criterion for direct Riemann integrability (Remark 34 in \cite{serfozo2009basics}), to show that $\epsilon(x)/x$ is directly Riemann integrable over $(1,\infty)$, it suffices to exhibit an eventually decreasing integrable function $\Delta(x)$, such that $|\epsilon(x)|/x \leq \Delta(x)$ for all sufficiently large $x$. This follows from the following inequalities
\begin{align*}
    \frac{\abs{\epsilon(x)}}{x} \leq  \frac{1}{x}\int_0^1 \abs{p_x(t)-p(t)}|F(tx)|\,dt \leq \frac{\norm{F}_\infty \norm{p_x-p}_1}{x} \leq C \frac{\norm{F}_\infty}{x^{1+\kappa}}.
\end{align*}
In the last inequality, we used the fact that $\set{p_n}_{n = 1}^\infty$ strongly discretizes $p$. This concludes the proof.

\end{proof}

\begin{rmk}[Comparison to Envelope Bounded Weights]
If $p(t)\asymp t^{-\gamma}$ as $t\downarrow 0$, then the discretization assigns
$p_n(1)\asymp n^{\gamma-1}$, i.e., a polynomial ceiling $c(n)\asymp n^{\gamma}$.
This illustrates that \cref{thm:discrete-fixed-shape} is genuinely outside the polylogarithmic
envelope regime of \cref{thm:main}, and relies crucially on the fixed shape structure.
\end{rmk}

\appendix
\section{Appendix}

\begin{prop}\label{prop:convergence-contraction}
	Let $A,B > 0$ and $\alpha, \beta \geq 0$ be constants satisfying $\alpha + \beta / 2 \leq 1$. 
	Suppose $\varepsilon_n = A (\log n)^{-\alpha}$ and $\delta_n = B(\log n)^{-\beta}$.  Let $\set{(\Delta_T, n_T)}_{T=0}^\infty$  be a sequence with initialization $\Delta_0=1$ and $n_0 = k$, defined by 
		\begin{gather*}
			\Delta_{T+1} \triangleq  \Delta_T \times (1- c\varepsilon_{n_{T+1}}\delta_{n_{T+1}}^{1/2}) \text{ where } 
			n_{T+1} \triangleq  n_T \times \frac{C}{\varepsilon_{n_{T+1}}^2\delta_{n_{T+1}}^3 \Delta_T^2}.
		\end{gather*}
		 Then, $\Delta_T \to 0$ as $T \to \infty$. 
\end{prop}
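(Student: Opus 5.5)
The plan is to argue by contradiction. Since $\Delta_T$ is positive and non-increasing (each factor $1-c\varepsilon_{n_{T+1}}\delta_{n_{T+1}}^{1/2}$ lies in $(0,1)$ once $k$ is large), the limit $\Delta_\infty \triangleq \lim_T \Delta_T$ exists. Suppose $\Delta_\infty > 0$. Using $\log(1-x)\le -x$, we get
\begin{equation*}
\log \Delta_T \le -c\sum_{S=1}^{T}\varepsilon_{n_S}\delta_{n_S}^{1/2}.
\end{equation*}
So it suffices to show that $\sum_S \varepsilon_{n_S}\delta_{n_S}^{1/2} = \infty$ under the assumption $\Delta_T \ge \Delta_\infty > 0$ for all $T$.

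First, writing $L_T \triangleq \log n_T$, the hypothesis $\alpha+\beta/2\le 1$ gives
\begin{equation*}
\varepsilon_{n}\delta_{n}^{1/2} = A B^{1/2}(\log n)^{-(\alpha+\beta/2)} \ge \frac{c_0}{\log n} \quad \text{for } n \ge k \text{ large},
\end{equation*}
so it is enough to prove $\sum_T 1/L_T = \infty$.

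Second, I control the growth of $L_T$. Taking logarithms in the implicit definition of $n_{T+1}$ gives
\begin{equation*}
L_{T+1} = L_T + \log C + 2\log\tfrac{1}{\varepsilon_{n_{T+1}}} + 3\log\tfrac{1}{\delta_{n_{T+1}}} + 2\log\tfrac{1}{\Delta_T}.
\end{equation*}
Under the contradiction hypothesis the last term is at most $2\log(1/\Delta_\infty)=O(1)$. The middle terms equal $(2\alpha+3\beta)\log L_{T+1} + O(1)$. Hence $L_{T+1} \le L_T + K\log L_{T+1} + K$ for a constant $K$ independent of $T$.

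Because the increment is only logarithmic in $L_{T+1}$, for $k$ large this forces $L_{T+1}\le 2L_T$: if $L_{T+1}>2L_T$, then $L_{T+1}/2 < K\log L_{T+1}+K$, which fails once $L_{T+1}\ge L_0=\log k$ is large. Therefore
\begin{equation*}
L_{T+1} \le L_T + K\log L_T + K',
\end{equation*}
and an induction shows $L_T \le M(T+2)\log(T+2)$ for a suitable $M$ depending on $k$ and $K$. The inductive step uses $\log\bigl(M(T+2)\log(T+2)\bigr) \le 2\log(T+2) + \log M$, which is absorbed by the increment of $M(T+2)\log(T+2)$, namely at least $M\log(T+2)$, once $M\ge 2K+K'+\log M$.

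Consequently,
\begin{equation*}
\sum_T \frac{1}{L_T} \ge \frac{1}{M}\sum_T \frac{1}{(T+2)\log(T+2)} = \infty,
\end{equation*}
so $\Delta_T\to 0$, contradicting $\Delta_\infty>0$.

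The main obstacle is the borderline case $\alpha+\beta/2=1$. There the argument has no room: one needs $L_T$ to grow at most like $T\log T$ rather than $T^{1+\eta}$. This requires showing that the $\log(1/\varepsilon)$ and $\log(1/\delta)$ contributions to $L_{T+1}-L_T$ are genuinely $O(\log L_T)$ and not larger, and handling the implicitness of $n_{T+1}$. That is why I first establish $L_{T+1}\le 2L_T$ before running the induction.
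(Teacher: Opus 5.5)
Your proof is correct and follows essentially the same route as the paper. You assume $\Delta_T \to \Delta > 0$, use the implicit definition to get $n_{T+1} \le n_T(\log n_T)^{O(1)}$ and hence $\log n_T = O(T\log T)$, and conclude $\sum_T \varepsilon_{n_T}\delta_{n_T}^{1/2} \gtrsim \sum_T (T\log T)^{-(\alpha+\beta/2)} = \infty$. Your step $L_{T+1}\le 2L_T$ just spells out how the paper handles the implicitness of $n_{T+1}$. The only slip is cosmetic: the condition on $M$ in your inductive step should read $M \ge 2K + K' + K\log M$ (you dropped the factor $K$), which is harmless since any sufficiently large $M$ works.
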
	
\begin{proof} 
	To prove the convergence, it suffices to show that 
	\begin{equation*}
		\sum_{T= 1}^\infty \varepsilon_{n_{T}}\delta_{n_{T}}^{1/2} = \infty.
	\end{equation*}
	Assume contrary that $\Delta_T \to \Delta >0$.  By definition of $n_{T+1}$, we get
	\begin{align*}
		\frac{n_{T+1}}{\log (n_{T+1})^{2\alpha + 3\beta}} \leq  \frac{C}{A^2B^3\Delta^2} \times n_T &\implies n_{T+1} \leq n_T \times \log (n_T)^{O(1)} \\
		&\implies \log n_T \leq O(T \log T).
	\end{align*}
	Consequently,  $\varepsilon_{n_T}\delta_{n_T}^{1/2} = \Omega\paren{[T \log T]^{-\set{\alpha + \frac{\beta}{2}}}}$, which forms a divergent series in $T$ when $\alpha + \beta / 2 \leq 1$. 
	\end{proof}

\begin{prop}\label{prop:divergence-contraction}
	Let $A,B > 0$ and $\alpha, \beta > 0$ be constants satisfying $\alpha + \beta / 2 > 1$. 
	Suppose $\varepsilon_n = A (\log n)^{-\alpha}$ and $\delta_n = B(\log n)^{-\beta}$. Let $\set{(U_T, n_T)}_{T = 0}^\infty$ be a sequence with initialization   $U_0 > 0$   and $n_0 \geq k$, defined by  
	\begin{equation*}
		n_{T+1} = n_T / 2 \delta_{n_T}^{1/2} \text{ and }U_{T+1} = U_T(1-C\varepsilon_{n_T}\delta_{n_T}^{1/2}) \text{ for all } T\geq 0.
	\end{equation*} 
	If $k$ is sufficiently large, then 
	\begin{equation*}
		U_\infty \triangleq  \lim_{T \to \infty}U_T  > \frac{U_0}{2}
	\end{equation*}
\end{prop}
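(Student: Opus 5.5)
Since $U_T = U_0\prod_{S<T}(1-C\varepsilon_{n_S}\delta_{n_S}^{1/2})$ and each factor lies in $(1/2,1]$ once $k$ is large, I will use $\log(1-x)\ge -2x$ for $x\le 1/2$. The claim $U_\infty>U_0/2$ then follows from
\begin{equation*}
	\sum_{T=0}^\infty C\varepsilon_{n_T}\delta_{n_T}^{1/2} < \frac{\log 2}{2},
\end{equation*}
which can be arranged by taking $k$ large. It therefore suffices to show that the series $\sum_T \varepsilon_{n_T}\delta_{n_T}^{1/2}$ converges, with a tail that goes to zero uniformly as $n_0\ge k\to\infty$.

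The key step is a lower bound on how fast $\log n_T$ grows. By definition, $\log n_{T+1} = \log n_T + \log\frac{1}{2\delta_{n_T}^{1/2}} = \log n_T + \frac{\beta}{2}\log\log n_T - \log(2B^{1/2})$. Write $L_T \triangleq \log n_T$. For $k$ large, the increment satisfies $L_{T+1}-L_T \ge \frac{\beta}{4}\log L_T \ge \frac{\beta}{4}\log L_0$, and this is positive since $\beta>0$. Hence $L_T$ is increasing. A crude linear bound $L_T\ge L_0 + cT$ is too weak, because it only gives terms of order $T^{-(\alpha+\beta/2)}$. That is still summable, since $\alpha+\beta/2>1$, and this is exactly the regime where the paper's threshold bites.

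The rest is bookkeeping. With $L_T \ge L_0 + c\,T$ and $c=\frac{\beta}{4}\log L_0\ge 1$ for large $k$, we get
\begin{equation*}
	\varepsilon_{n_T}\delta_{n_T}^{1/2} = AB^{1/2} L_T^{-(\alpha+\beta/2)} \le AB^{1/2}(L_0+T)^{-(\alpha+\beta/2)}.
\end{equation*}
Summing over $T\ge0$ and comparing with an integral gives at most $AB^{1/2}\bigl(L_0^{-(\alpha+\beta/2)}+\frac{L_0^{1-(\alpha+\beta/2)}}{\alpha+\beta/2-1}\bigr)$. This tends to $0$ as $L_0=\log n_0\ge\log k\to\infty$, precisely because $\alpha+\beta/2>1$.

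The main obstacle is making the statement "for $k$ large" uniform. I need $C\varepsilon_{n}\delta_n^{1/2}\le 1/2$ for all $n\ge k$ so that the log inequality applies, and I need the increment bound to hold at every stage. Both follow because $L_T\ge L_0\ge\log k$ for all $T$ by monotonicity, and every required inequality holds once $\log k$ exceeds a constant depending on $A,B,C,\alpha,\beta$. Integrality issues in $n_T$ are ignored, as elsewhere in the paper.
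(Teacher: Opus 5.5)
Your proof is correct and follows the paper's route. Like the paper, you reduce to summability of $\varepsilon_{n_T}\delta_{n_T}^{1/2}$ and show that $\log n_T$ grows linearly in $T$: the paper uses $n_{T+1}\ge 2n_T$, while you use increments of at least $\frac{\beta}{4}\log\log n_0$. You then conclude from $\alpha+\beta/2>1$. Your explicit bound $AB^{1/2}\bigl(L_0^{-(\alpha+\beta/2)}+L_0^{1-(\alpha+\beta/2)}/(\alpha+\beta/2-1)\bigr)$ makes precise the uniformity in $k$ that the paper only asserts.

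One wording slip to fix: the linear bound $L_T\ge L_0+cT$ is not ``too weak''. As your next sentence and your bookkeeping show, it is exactly what suffices.
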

\begin{proof}
	Observe that as long as $U_\infty >0$, we may choose $k$ sufficiently large to  guarantee $U_\infty > U_0 / 2$. It therefore suffices to prove $U_\infty > 0$, which is equivalent to

	\begin{equation*}
		\sum_{T=0}^\infty \varepsilon_{n_T}\delta_{n_T}^{1/2} < \infty.
	\end{equation*}
	It follows from the definition that $n_{T+1} \geq 2n^T$, as long as $k$ is sufficiently large. This implies $\log n_T = \Omega(T)$. In particular, $\varepsilon_{n_T}\delta_{n_T}^{1/2} = O\paren{T^{-\set{\alpha + \frac{\beta}{2}}}}$ forms a convergent series when $\alpha + \beta / 2 > 1$.
\end{proof}

\bibliographystyle{plain}
\bibliography{references}

\end{document}